\newcommand{\bbeta}{\boldsymbol{\beta}}
\newcommand{\bpi}{\boldsymbol{\pi}}
\newcommand{\bx}{\mathbf{x}}
\newcommand{\ba}{\mathbf{a}}
\newcommand{\bB}{\mathbf{B}}
\newcommand{\bzero}{\mathbf{0}}
\newcommand{\cB}{\mathcal{B}}
\newcommand{\barB}{\bar{\bB}}
\newcommand{\opt}{\mathrm{opt}}
\newcommand{\slj}{\sum\limits_{j = 1}}
\providecommand{\keywords}[1]
{
  \small	
  \textbf{\textit{Keywords---}} #1
}
\renewenvironment{abstract}
{\begin{quote}
\noindent \rule{\linewidth}{.5pt}\par{\bfseries \abstractname.}}
{\medskip\noindent \rule{\linewidth}{.5pt}
\end{quote}
}
\newtheorem{defn}{Definition}
\newtheorem{myl}{Lemma}
\newtheorem{mypro}{Proposition}
\newtheorem{theorem}{Theorem}
\newtheorem{cor}{Corollary}
\newtheorem{remark}{Remark}
\newtheorem{example}[theorem]{Example}
\newcommand{\continuation}{??}
\newenvironment{continueexample}[1]
 {\renewcommand{\continuation}{\ref{#1}}\excont[continued]}
 {\endexcont}
\title{Primal Construction of Integer Programming Value Functions}
\def\tajayi#1{\color{black}#1}
\def\wenxin#1{\color{black}#1}
\newenvironment{makeshiftResult}[2]
{
\noindent\textbf{#1~#2.}\em}
{

}
\title{A Gilmore-Gomory Construction of Integer Programming Value Functions}
\author[1]{\normalsize Seth Brown}
\author[2]{\normalsize Wenxin Zhang}
\author[1,3]{\normalsize Temitayo Ajayi}
\author[1]{\normalsize Andrew J. Schaefer \thanks{andrew.schaefer@rice.edu}}
\affil[1]{\footnotesize Rice University, Department of Computational and Applied Mathematics}
\affil[2]{\footnotesize Tsinghua University, Department of Industrial Engineering}
\affil[3]{\footnotesize The University of Texas MD Anderson Cancer Center, Department of Radiation Oncology}
\date{}
\begin{document}

\maketitle

\begin{abstract}
\small
In this paper, we analyze how sequentially introducing decision variables into an integer program (IP) affects the value function and its level sets. We use a Gilmore-Gomory approach to find parametrized IP value functions over a restricted set of variables. We introduce the notion of maximal connected \color{black}subsets of level sets - volumes in which changes to the constraint right-hand side have no effect on the value function - and relate these structures to IP value functions and optimal solutions.\par
\end{abstract}
\keywords{Value function, level set, parametrized optimization} 

\section{Introduction}
Given a constraint matrix $\mathbf{A} \in \mathbb{Z}^{m \times n}_{+}$ and objective coefficients $\mathbf{c} \in \mathbb{Z}^{n}$, \color{black}the integer programming (IP) value function represents the optimal objective value of an IP parametrized by the right-hand side. Let $\mathbf{b} \in \mathbb{Z}^{m}_{+}$ be a component-wise upper bound on permissible right-hand sides. Define $\cB \coloneqq \bigtimes\limits_{i = 1}^{m} [0, b_{i}]$\color{black}. Given $\bbeta \in\cB$\color{black}, the parametrized IP, IP($\bbeta$), and its value function, $z:\cB\to \mathbb{R}$, are defined by 
\begin{align*}
z(\bbeta) \coloneqq\max\limits_{\bx}\left\{\mathbf{c}^{\top}\bx : \mathbf{A}\bx \leq \bbeta,\ 
\bx \in \mathbb{Z}^{n}_{+}\right\}.\tag{IP($\bbeta$)}
\end{align*}
Note that because of the nonnegativity of $\mathbf{A}$ and $\mathbf{b}$, without loss of generality we assume $\mathbf{c}$ is also nonnegative. Denote the $j^{th}$ column vector of the matrix $\mathbf{A}$ by $\mathbf{a}_j$. We assume that $\ba_{j} \neq \bzero$ and $\ba_{j} \leq \mathbf{b}$, for all $j \in \{1,\dots,n\}$. Note that $z(\bbeta) = z(\lfloor \bbeta \rfloor)$ for all $\bbeta \in \cB$, and \color{black} that each parametrized IP is feasible because $\bzero$ is a feasible solution for each $\bbeta \in \mathcal{B}$, and the IPs are bounded because $\mathbf{A}$ is nonnegative with no zero columns.\par
Studying the value function of the parametrized IP is particularly useful in cases where IP($\bbeta$) must be solved many times for different right-hand sides, such as in bilevel programming, where IP($\bbeta$) in the form of the follower problem has its right-hand side depend on the leader problem decisions. Similarly, in stochastic programming, IP($\bbeta$) in the form of the second-stage problem is dependent on both the first-stage decisions and the resolution of uncertain values. Parametrized solution approaches to these problems can be found in, e.g., \cite{ls2017,wx2017} for bilevel optimization, and \cite{ss1998,Ahmed2003,Kong2006} for stochastic optimization.\color{black}\par
Early work on parametrized IP focuses on value functions and their construction. \color{black}\cite{Blair77} provide bounds on the variation of the value function relative to the variation of the right-hand sides for both mixed-integer programs (MIPs) and IPs.
\cite{Blair82} show that IP value functions are Gomory functions, and \cite{Blair95} generalizes this result for MIPs. \cite{Llewellyn1993} use the connection between IP value functions and Gomory functions to produce a primal-dual algorithm for solving 0-1 IPs.
\cite{Williams1996} provides a doubly recursive procedure using Chv\'{a}tal functions to construct the value function of a conic integer linear program.

More recent work focuses on the construction and applications of value functions. \cite{Ralphs2014} provide an algorithm for constructing value functions for MIPs. Value functions have also been incorporated into solution approaches for two-stage stochastic integer programs \citep{Ozaltin2012,Trapp2015}.
Various value function approaches have also been applied to the solution of mixed-integer bilevel programs.
\cite{Onur2019} use a generalized MIP value function, which takes both the objective function coefficients and the constraint right-hand side as arguments, to solve both stochastic and multifollower bilevel MIPs. \cite{Basu2018} use Gomory, Chv\'{a}tal, and Jeroslow functions, as well as value functions, to analyze the representability of mixed-integer bilevel programs.

Some of our results are analogous to the Gilmore-Gomory approach for knapsack problems \cite{gg1966}, which uses dynamic programming to recursively determine the value function of a one- or two-dimensional knapsack IP. Our method focuses on one variable at each step; in contrast, the approach in \cite{gg1966} considers all variables at each recursive level.

Thus far, level sets of the value function, along with related topics, such as level-set-minimal vectors, have been sparsely studied. \cite{Kong2006} use minimal tenders in a stochastic programming solution approach. \cite{Trapp2014} introduce the notion of level-set-minimal vectors and use them to represent the value functions of the first and second stage of a stochastic program. \cite{Ajayi2020} relate level-set-minimal vectors to the linear programming relaxation gap functions. The stability regions discussed in \cite{kk2009} are similar to the value function level sets discussed in our work, but \cite{kk2009} use a parametrization of the objective function to study changes to optimal solutions, not the objective value.\color{black}

In this paper, we examine the properties of the level sets of IP value functions in detail, especially the connections among these level sets as primal variables are added to a problem.
We develop a primal construction of the IP value function, in constrast to dual approaches, for example, using Chv\'{a}tal or Gomory functions \citep{Blair82}. A primal approach enables us to analyze the behavior of restricted versions of the IP. Our contributions are as follows:
\begin{enumerate}
\item We analyze the IP value function over subsets of primal variables, including how the value function's level sets change as primal variables are added iteratively to the formulation in a Gilmore-Gomory-type procedure.
\item We characterize the structure of level sets of IPs. We show how the level sets of a restricted IP relate to the level sets when a new variable is included.
\item We introduce the notion of maximal connected \color{black}subsets of level sets and demonstrate several properties of these subsets. These results can be used to determine common optimal solutions within a maximal connected subset. 
\end{enumerate}
Our key results include:
\begin{itemize}\item A sufficient condition for a right-hand side to be level-set minimal (\thref{down});
\item A recursive approach to construct variable-restricted value functions (\thref{genstepup});
\item Additional connectedness properties of maximally-connected level sets and lattices (\thref{Csteppro} and \thref{MC_Adjacent}).\end{itemize}\color{black}

\section{Level Sets of the IP Value Function}\label{sec:Characterization}
For any $\alpha \in \mathbb{R}$, the \textit{level set} $S(\alpha)$ of the value function $z$ is the set of right-hand sides over which the function takes on the value $\alpha$: $S(\alpha)\coloneqq\left\{\bm{\beta}\in\mathcal{B}\,\vert\,z(\bm{\beta})=\alpha\right\}$. If $S(\alpha )=\emptyset$, then for all $\bbeta \in \mathcal{B}$, $z(\bm{\beta}) \neq \alpha$. In particular, under our assumptions, for all $\alpha\in (-\infty,0)\cup(z(\mathbf{b}),\infty)$, $S(\alpha) =  
\emptyset$. We examine the structure of level sets of IPs and develop properties of level sets over subsets of the primal variables. We focus on the case of adding or removing a single primal variable at a time, but these results can also be extended to sets of primal variables.

Define the restricted value function, $z_k(\bbeta)$, with respect to the parametrized IP over the first $k\in\{1, ..., n\}$ variables, IP$_k(\bm{\beta})$, as
\begin{align*}
z_k(\bbeta) \coloneqq \max\limits_{\bx}\left\{\sum_{j=1}^kc_jx_j : \sum_{j=1}^k\mathbf{a}_jx_j \leq \bbeta,\ 
\bx \in \mathbb{Z}^{k}_{+}\right\}.\tag{IP$_k(\bbeta$)}
\end{align*}
Define
    $S_k(\alpha)\coloneqq\left\{\bm{\beta}\in\mathcal{B}\,\vert\,z_k(\bm{\beta})=\alpha\right\}$ and
\wenxin{
    $\opt_k(\bm{\beta})\coloneqq\arg\max\limits_{\mathbf{x}}\left\{\sum_{j=1}^kc_jx_j: \sum_{j=1}^k\mathbf{a}_jx_j \leq \bbeta,\ 
\bx \in \mathbb{Z}^{k}_{+}\right\}$.
}  \color{black}

\subsection{Properties of Restricted Value Functions}
We apply fundamental properties of the IP value function to restricted value functions. 

\begin{mypro}\cite{wolsey1981integer}\thlabel{elementary} Basic properties of the restricted value functions $z_{k}$, for $k = 1,\dots,n,$ include:
\begin{enumerate}[label = (\arabic*)]
\item $z_{k}(\mathbf{a}_j)\geq c_j$ for $j=1,\dots,k$.
\item  $z_{k}$ is nondecreasing over $\mathcal{B}$.
\item $z_{k}$ is superadditive over $\mathcal{B}$; i.e., for all $\bbeta_1$, $\bbeta_2\in \mathcal{B}$, $\bbeta_1+\bbeta_2\in \mathcal{B}$, $z_{k}(\bbeta_1)+z_{k}(\bbeta_2)\leq z_{k}(\bbeta_1+\bbeta_2)$. 
\end{enumerate}
\end{mypro}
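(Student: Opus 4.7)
The plan is to prove each of the three parts directly from the definition of $z_{k}$, exploiting only the feasibility of constructed integer vectors. The proposition is cited from Wolsey, and the properties are inherited by the restricted value function with no essential change from the unrestricted case, so I expect no real obstacle; the main point is simply to record the constructions.

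For part (1), the plan is to exhibit the unit vector $\mathbf{e}_{j} \in \mathbb{Z}^{k}_{+}$ as a feasible point of IP$_{k}(\ba_{j})$, which is legal since $j \leq k$. Its image under the left-hand side is exactly $\ba_{j}$, so the constraint is tight and feasibility is immediate. The objective value at this point is $c_{j}$, yielding $z_{k}(\ba_{j}) \geq c_{j}$ by the definition of the maximum.

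For part (2), I would pick any $\bbeta_{1}, \bbeta_{2} \in \cB$ with $\bbeta_{1} \leq \bbeta_{2}$ (componentwise) and argue that the feasible region of IP$_{k}(\bbeta_{1})$ is contained in that of IP$_{k}(\bbeta_{2})$: if $\sum_{j=1}^{k} \ba_{j} x_{j} \leq \bbeta_{1}$, then by transitivity $\sum_{j=1}^{k} \ba_{j} x_{j} \leq \bbeta_{2}$. Maximizing the same objective over a larger set yields $z_{k}(\bbeta_{1}) \leq z_{k}(\bbeta_{2})$.

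For part (3), the plan is the standard sum-of-optimizers argument. Let $\bx^{1} \in \opt_{k}(\bbeta_{1})$ and $\bx^{2} \in \opt_{k}(\bbeta_{2})$; these exist because every restricted IP is feasible (with $\bzero$) and bounded, as noted in the introduction. Set $\bx \coloneqq \bx^{1} + \bx^{2} \in \mathbb{Z}^{k}_{+}$ and verify
\[
\sum_{j=1}^{k} \ba_{j} x_{j} \;=\; \sum_{j=1}^{k} \ba_{j} x^{1}_{j} + \sum_{j=1}^{k} \ba_{j} x^{2}_{j} \;\leq\; \bbeta_{1} + \bbeta_{2},
\]
so $\bx$ is feasible for IP$_{k}(\bbeta_{1}+\bbeta_{2})$, which lies in $\cB$ by hypothesis. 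Its objective value is $z_{k}(\bbeta_{1}) + z_{k}(\bbeta_{2})$, giving the claimed inequality. The only subtlety worth flagging is the need to confirm $\bbeta_{1}+\bbeta_{2} \in \cB$ so that $z_{k}(\bbeta_{1}+\bbeta_{2})$ is defined in the statement, but this is already part of the hypothesis in (3).
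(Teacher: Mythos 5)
The paper states this proposition as a cited result from Wolsey and provides no proof of its own, so there is nothing to compare against; your three arguments --- exhibiting $\mathbf{e}_{j}$ as a feasible point of IP$_{k}(\ba_{j})$, nesting of feasible regions under componentwise ordering of the right-hand side, and summing optimal solutions of IP$_{k}(\bbeta_{1})$ and IP$_{k}(\bbeta_{2})$ --- are the standard ones and are all correct, including the checks that $\ba_{j} \in \cB$ (guaranteed by the paper's assumption $\ba_{j} \leq \mathbf{b}$) and that $\bbeta_{1}+\bbeta_{2} \in \cB$ is part of the hypothesis in (3).
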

\begin{mypro}\cite{NemhauserWolsey1988}\thlabel{IPCS} Given $k\in \{1, ..., n\}$, if \wenxin{ $\mathbf{x}^*$}\color{black}\ $\in \opt_k(\bm{\beta})$, then for all $\mathbf{x} \in \mathbb{Z}_+^k$ such that $\mathbf{x} \leq $ \wenxin{$\mathbf{x}^*$, }\color{black} $z_k(\sum_{j=1}^k\mathbf{a}_jx_j) = \sum_{j=1}^k c_jx_j$ and $z_k(\sum_{j=1}^k\mathbf{a}_jx_j)+z_k(\bbeta-\sum_{j=1}^k\mathbf{a}_jx_j) = z_k(\bbeta)$.
\end{mypro}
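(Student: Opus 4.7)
The plan is to deduce both claims simultaneously by sandwiching the sum $z_k(\sum_j \ba_j x_j) + z_k(\bbeta - \sum_j \ba_j x_j)$ between $z_k(\bbeta)$ above and $z_k(\bbeta)$ below, using superadditivity (Proposition \ref{elementary}, part (3)) in one direction and the feasibility of carefully chosen vectors in the other.

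First I would introduce the complementary vector $\mathbf{x}' \coloneqq \mathbf{x}^* - \mathbf{x}$. Since $\mathbf{x}, \mathbf{x}^* \in \mathbb{Z}_+^k$ and $\mathbf{x} \leq \mathbf{x}^*$, $\mathbf{x}'$ lies in $\mathbb{Z}_+^k$. The vector $\mathbf{x}$ is clearly feasible for IP$_k(\sum_j \ba_j x_j)$, giving the bound $z_k(\sum_j \ba_j x_j) \geq \sum_j c_j x_j$. Likewise, $\sum_j \ba_j x'_j = \sum_j \ba_j x_j^* - \sum_j \ba_j x_j \leq \bbeta - \sum_j \ba_j x_j$, so $\mathbf{x}'$ is feasible for IP$_k(\bbeta - \sum_j \ba_j x_j)$, yielding $z_k(\bbeta - \sum_j \ba_j x_j) \geq \sum_j c_j x_j^* - \sum_j c_j x_j = z_k(\bbeta) - \sum_j c_j x_j$. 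Adding these two inequalities produces $z_k(\sum_j \ba_j x_j) + z_k(\bbeta - \sum_j \ba_j x_j) \geq z_k(\bbeta)$.

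Next I would apply superadditivity in the reverse direction. Before doing so, I need to verify that both $\sum_j \ba_j x_j$ and $\bbeta - \sum_j \ba_j x_j$ lie in $\cB$: the first is nonnegative (since $\ba_j \geq \bzero$ and $\mathbf{x} \geq \bzero$) and bounded above by $\sum_j \ba_j x_j^* \leq \bbeta \leq \mathbf{b}$; the second is nonnegative by the same chain and bounded above by $\bbeta \leq \mathbf{b}$. Their sum is $\bbeta \in \cB$, so Proposition \ref{elementary}(3) gives $z_k(\sum_j \ba_j x_j) + z_k(\bbeta - \sum_j \ba_j x_j) \leq z_k(\bbeta)$. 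Combining this with the opposite inequality forces equality, which establishes the second claim.

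Finally, because equality holds in the sum, both of the individual lower bounds from the first step must also be tight; in particular $z_k(\sum_j \ba_j x_j) = \sum_j c_j x_j$, which is the first claim. No step looks genuinely obstructive: the only mild subtlety is verifying that the arguments stay inside $\cB$ so that superadditivity applies, but this is immediate from $\mathbf{x} \leq \mathbf{x}^*$ and $\bbeta \leq \mathbf{b}$.
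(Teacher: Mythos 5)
Your proof is correct. The paper gives no proof of \thref{IPCS} at all --- it is quoted from \cite{NemhauserWolsey1988} as a known result --- so there is nothing to compare against; your sandwich argument (feasibility of $\mathbf{x}$ for IP$_k(\sum_{j=1}^k\ba_j x_j)$ and of $\mathbf{x}^*-\mathbf{x}$ for IP$_k(\bbeta-\sum_{j=1}^k\ba_j x_j)$ to get the lower bound, superadditivity from \thref{elementary}(3) for the upper bound, then tightness of both individual bounds) is the standard derivation of IP complementary slackness, and every step, including the check that both arguments lie in $\cB$, goes through.
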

\thref{IPCS} is often referred to as IP complementary slackness.

\begin{myl}\thlabel{nondecreasingbetweeniterations}
For all $k \in \{2,...,n\}$, if $\bm{\beta} \in S_{k-1}(\alpha_1)$ and $\bm{\beta} \in S_k(\alpha_2)$, then $\alpha_2\geq \alpha_1$.
\end{myl}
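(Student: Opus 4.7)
The plan is to show directly that $z_{k}(\bbeta) \geq z_{k-1}(\bbeta)$ for every $\bbeta \in \mathcal{B}$, which immediately yields $\alpha_2 = z_k(\bbeta) \geq z_{k-1}(\bbeta) = \alpha_1$ under the hypotheses of the lemma. The idea is the standard ``optimizing over a larger feasible region can only increase the maximum'' argument, specialized to the nested structure of IP$_k$ and IP$_{k-1}$.

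First, I would take any optimal solution $\mathbf{x}^{\ast} = (x_1^{\ast},\dots,x_{k-1}^{\ast}) \in \opt_{k-1}(\bbeta)$, so that $\sum_{j=1}^{k-1} c_j x_j^{\ast} = \alpha_1$ and $\sum_{j=1}^{k-1} \mathbf{a}_j x_j^{\ast} \leq \bbeta$. Next, I would lift this to a vector $\tilde{\mathbf{x}} \in \mathbb{Z}_{+}^{k}$ by appending a zero, i.e., $\tilde{x}_j = x_j^{\ast}$ for $j \leq k-1$ and $\tilde{x}_k = 0$. Then $\tilde{\mathbf{x}}$ is feasible for IP$_k(\bbeta)$ because $\sum_{j=1}^{k} \mathbf{a}_j \tilde{x}_j = \sum_{j=1}^{k-1} \mathbf{a}_j x_j^{\ast} \leq \bbeta$, and its objective value in IP$_k(\bbeta)$ equals $\sum_{j=1}^{k} c_j \tilde{x}_j = \sum_{j=1}^{k-1} c_j x_j^{\ast} = \alpha_1$.

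Since $z_k(\bbeta)$ is the maximum objective value over all feasible solutions to IP$_k(\bbeta)$, and $\tilde{\mathbf{x}}$ is one such feasible solution, I conclude $\alpha_2 = z_k(\bbeta) \geq \alpha_1$. The argument uses only the feasibility of $\bzero$ in the $k$-th coordinate, which is valid because IP$_k$ requires $\bx \in \mathbb{Z}_{+}^{k}$ and does not force $x_k > 0$.

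There is no real obstacle here: the lemma is essentially a monotonicity-in-$k$ statement that follows from the nested feasible regions, and the proof amounts to exhibiting the zero-extension of an optimal solution of IP$_{k-1}(\bbeta)$ as a feasible point of IP$_k(\bbeta)$. It is worth noting that this inequality can be strict, for instance when including the $k$-th column allows an increase in the objective, but the lemma only asserts the weak inequality.
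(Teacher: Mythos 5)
Your proof is correct: zero-extending an optimal solution of IP$_{k-1}(\bbeta)$ gives a feasible solution of IP$_k(\bbeta)$ with the same objective value, so $z_k(\bbeta)\geq z_{k-1}(\bbeta)$. The paper omits the proof of this lemma entirely (treating the nested-feasible-region monotonicity as immediate), and your argument is exactly the intended one.
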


\thref{nondecreasingbetweeniterations} states that $z_k(\bbeta)$ increases monotonically with $k$ for fixed $\bbeta$ - as more variables become available, the value of the restricted problem can only improve.

\begin{myl}\thlabel{notOptimal}
Given $k \in \{2, ..., n\}$, $\mathbf{x}\in\mathbb{Z}^k_+$ such that $\sum_{j=1}^k\mathbf{a}_j x_j\leq \mathbf{b}$, let $\alpha=\sum_{j=1}^kc_j x_j$. If $S_{k}(\alpha)= \emptyset$, then ${\mathbf{x}}\notin\opt_k(\sum_{j=1}^k\mathbf{a}_j{x}_j)$.
\end{myl}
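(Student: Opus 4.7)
The plan is a short proof by contrapositive (equivalently, contradiction). Suppose, toward a contradiction, that $\mathbf{x} \in \opt_k(\bm{\beta})$ where $\bm{\beta} \coloneqq \sum_{j=1}^k \mathbf{a}_j x_j$. I would first verify that this $\bm{\beta}$ lies in $\mathcal{B}$: by hypothesis $\sum_{j=1}^k \mathbf{a}_j x_j \leq \mathbf{b}$, and since $\mathbf{A}$ and $\mathbf{x}$ are componentwise nonnegative we also have $\bm{\beta} \geq \mathbf{0}$, so $\bm{\beta} \in \mathcal{B}$ and therefore $S_k(\alpha)$ is well-defined as a subset of $\mathcal{B}$.

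Next, I would apply the definitions directly. By definition of $\opt_k(\bm{\beta})$, the vector $\mathbf{x}$ achieves the optimal value of IP$_k(\bm{\beta})$, so $z_k(\bm{\beta}) = \sum_{j=1}^k c_j x_j = \alpha$. But this says precisely that $\bm{\beta} \in S_k(\alpha)$, so $S_k(\alpha) \neq \emptyset$, contradicting the hypothesis $S_k(\alpha) = \emptyset$. Hence $\mathbf{x} \notin \opt_k(\sum_{j=1}^k \mathbf{a}_j x_j)$, as required.

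There is essentially no obstacle here; the statement is almost immediate from the definitions of level set, restricted value function, and optimal set, and no use of the earlier propositions (superadditivity, IP complementary slackness, or \thref{nondecreasingbetweeniterations}) is needed. The only care point is keeping the roles of $\alpha$ (defined from $\mathbf{x}$) and $z_k(\bm{\beta})$ (the optimal value at the induced right-hand side) distinct until the final identification forces them to be equal under the assumption of optimality.
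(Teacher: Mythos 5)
Your proof is correct: unwinding the definitions of $\opt_k$, $z_k$, and $S_k(\alpha)$ immediately shows that optimality of $\mathbf{x}$ at $\bm{\beta}=\sum_{j=1}^k\mathbf{a}_jx_j$ would place $\bm{\beta}$ in $S_k(\alpha)$, contradicting emptiness. The paper omits a proof of this lemma entirely (treating it as immediate), and your argument is exactly the intended one-line contrapositive, including the worthwhile check that $\bm{\beta}\in\mathcal{B}$.
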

Hence, \thref{notOptimal} gives a necessary condition for optimality. Let $\mathbf{e}_i$ indicate the $i$th unit vector.

\begin{mypro}\thlabel{schaeferoriginal}
Given $k \in \{2, ..., n\}$ and $\bbeta \in \cB$, let $\mathbf{x}^*\in \opt_k(\bm{\beta})$. We have:
\begin{enumerate}[label = (\arabic*)]
\item  For all $t \in \mathbb{Z}_{+}$ such that $t \leq x^{*}_{k}$, $z_k(\bm{\beta})=z_k(\bm{\beta}-t\mathbf{a}_{k})+z_k(t\mathbf{a}_k)=z_{k-1}(\bm{\beta}-\mathbf{a}_{k}x^*_{k})+c_k{x}_k^*$. Further, $\bx^{*} - t\mathbf{e}_{k} \in \opt_k(\bbeta - t\ba_{k})$, and if $\bbeta \in S_k(\alpha)$, then $\bbeta - t\ba_{k} \in S_{k}(\alpha - tc_{k})$. \label{orig.1}
\item If $\bm{\beta}\in S_k(\alpha)$, then $\bm{\beta}-\mathbf{a}_{k}x^*_{k}\in S_{k-1}(\alpha-c_{k}x^*_{k})$. \label{orig.3}
\item If $\bbeta \in S_{k}(\alpha)$ and $S_{k-1}(\alpha) = \emptyset$, then $x^{*}_{k} > 0$. \label{orig.4}
\end{enumerate}
\end{mypro}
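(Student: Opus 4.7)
The plan is to handle the three parts in order, since part \ref{orig.3} follows from part \ref{orig.1} and part \ref{orig.4} is a short contradiction argument using \thref{nondecreasingbetweeniterations}. The entire proposition is really an application of IP complementary slackness (\thref{IPCS}) to the fixed optimal solution $\mathbf{x}^{*}$, together with bookkeeping about what happens when we peel off $t$ copies of $\mathbf{a}_{k}$.

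For part \ref{orig.1}, I would first obtain the decomposition $z_{k}(\bbeta) = z_{k}(\bbeta - t\mathbf{a}_{k}) + z_{k}(t\mathbf{a}_{k})$ by applying \thref{IPCS} to the sub-vector $\mathbf{x} = t\mathbf{e}_{k}$, which satisfies $\mathbf{x} \leq \mathbf{x}^{*}$ because $t \leq x_{k}^{*}$. This same application gives $z_{k}(t\mathbf{a}_{k}) = tc_{k}$. For the second equality, I would fix $x_{k} = x_{k}^{*}$ in IP$_{k}(\bbeta)$ to get the inequality $z_{k}(\bbeta) \leq c_{k}x_{k}^{*} + z_{k-1}(\bbeta - \mathbf{a}_{k}x_{k}^{*})$, and combine it with the observation that $\mathbf{x}^{*}$ restricted to its first $k-1$ coordinates is feasible for IP$_{k-1}(\bbeta - \mathbf{a}_{k}x_{k}^{*})$ with objective $z_{k}(\bbeta) - c_{k}x_{k}^{*}$; the two inequalities pin down the equality. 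From these two identities, $\mathbf{x}^{*} - t\mathbf{e}_{k}$ is feasible for IP$_{k}(\bbeta - t\mathbf{a}_{k})$ with objective value exactly $z_{k}(\bbeta - t\mathbf{a}_{k})$, giving the optimality claim, and the level-set statement $\bbeta - t\mathbf{a}_{k} \in S_{k}(\alpha - tc_{k})$ is immediate.

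Part \ref{orig.3} is a direct consequence of the second equality in part \ref{orig.1}: rearranging $z_{k}(\bbeta) = z_{k-1}(\bbeta - \mathbf{a}_{k}x_{k}^{*}) + c_{k}x_{k}^{*}$ and substituting $z_{k}(\bbeta) = \alpha$ yields $z_{k-1}(\bbeta - \mathbf{a}_{k}x_{k}^{*}) = \alpha - c_{k}x_{k}^{*}$, which is the definition of $\bbeta - \mathbf{a}_{k}x_{k}^{*} \in S_{k-1}(\alpha - c_{k}x_{k}^{*})$.

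For part \ref{orig.4}, I would argue by contradiction: if $x_{k}^{*} = 0$, then $\mathbf{x}^{*}$ uses only the first $k-1$ variables and is feasible for IP$_{k-1}(\bbeta)$ with objective $\alpha$, so $z_{k-1}(\bbeta) \geq \alpha$. Combined with $z_{k-1}(\bbeta) \leq z_{k}(\bbeta) = \alpha$ from \thref{nondecreasingbetweeniterations}, this forces $z_{k-1}(\bbeta) = \alpha$, i.e.\ $\bbeta \in S_{k-1}(\alpha)$, contradicting $S_{k-1}(\alpha) = \emptyset$. The only real subtlety anywhere in the proof is the one in part \ref{orig.1}: pinning down the equality $z_{k-1}(\bbeta - \mathbf{a}_{k}x_{k}^{*}) = \sum_{j<k} c_{j}x_{j}^{*}$ by sandwiching it between two inequalities; the rest is routine manipulation of feasibility and optimality.
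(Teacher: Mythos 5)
Your proof is correct and takes essentially the same route as the paper, whose entire argument is that parts (1)--(2) follow from \thref{IPCS} and that (3) follows by the same contradiction you give (plugging $x_k^*=0$ into part (2)). One cosmetic slip worth fixing: restricting IP$_k(\bbeta)$ by fixing $x_k = x_k^*$ can only \emph{lower} the maximum, so that step yields $z_k(\bbeta) \geq c_k x_k^* + z_{k-1}(\bbeta - \ba_k x_k^*)$, while the opposite inequality $z_k(\bbeta) \leq c_k x_k^* + z_{k-1}(\bbeta - \ba_k x_k^*)$ is the one that comes from the feasibility of $(x_1^*,\dots,x_{k-1}^*)$ for IP$_{k-1}(\bbeta - \ba_k x_k^*)$; you have attributed each inequality to the other's source, but both bounds are valid and the sandwich still closes.
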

\begin{proof}
Statements \ref{orig.1}-\ref{orig.3} follow from \thref{IPCS}. For \ref{orig.4}, suppose that $x^{*}_{k} = 0$. By \ref{orig.3}, $\bm{\beta}\in S_{k-1}(\alpha)$, which is a contradiction.
\end{proof}
\thref{schaeferoriginal} demonstrates how the structure of optimal solutions can determine members of level sets over a restricted set of primal variables. In particular, \thref{schaeferoriginal} shows the impact on the value of a particular $\bbeta$ after removing a primal variable from the restricted problem. In addition, given $\bbeta$ and $k$, and given $\mathbf{x}^* \in \opt_k(\bbeta)$, for any feasible solution $ \mathbf{x}\leq \mathbf{x}^*$ \thref{IPCS} implies ${\mathbf{x}} \in \opt_k(\sum_{j=1}^k \mathbf{a}_jx_j)$.

\subsection{Level-Set-Minimal Vectors}
Level-set-minimal vectors represent the efficient frontiers for the corresponding level sets and can be used to construct the boundaries of a value function's level sets.
\begin{defn}\cite{Trapp2014}\thlabel{minimal}
A vector $\bm{\beta}\in\mathcal{B}$ is \textbf{level-set-minimal} with respect to $z:\mathcal{B}\rightarrow \mathbb{R}$ if $z(\bm{\beta}-\delta\mathbf{e}_i)<z(\bm{\beta})$ for all $\delta > 0$ for all $i\in\{1,\dots,m\}$ such that $\bm{\beta}-\delta\mathbf{e}_i\in\mathcal{B}$\color{black}.
For all $k \in \{1, ..., n\}$, $\bar{\mathbf{B}}_k\subset \mathbb{Z}^m_+$ is the set of level-set-minimal vectors with respect to $z_k$. Further, $\bar{\mathbf{B}} = \bar{\mathbf{B}}_n$ is the set of level-set-minimal vectors with respect to $z$.\color{black}
\end{defn}
We define $\bar{\mathcal{B}}$, the component-wise integral subset of the domain of $z$, as $\bar{\mathcal{B}} \coloneqq \cB \cap \mathbb{Z}^{m}_{+}$.
\begin{remark}
Note that if $\bm{\beta}\in\bar{\mathcal{B}}$, \thref{minimal} can be simplified as $\bm{\beta}$ is level-set-minimal if $z(\bbeta-\mathbf{e}_i) < z(\bbeta)$ for all $i\in \{1, ..., m\}$ such that $\bbeta-\mathbf{e}_i \in \bar{\mathcal{B}}$.
\end{remark}
\color{black}
Determining whether a vector is level-set-minimal is NP-complete (\cite{Trapp2014}). Thus, we provide a variety of necessary and sufficient conditions to verify whether a right-hand side is level-set-minimal.

\thref{equal} states that the set of level-set-minimal vectors is a subset of the image of $\mathbf{A}$ over $\mathbb{Z}^{n}_{+}$, which implies that all optimal solutions of a level-set-minimal vector are tight at all constraints. Note that for any $\bm{\beta}_1, \bm{\beta}_2\in \mathbb{R}^m$, we say that $\bm{\beta}_1\lneq \bm{\beta}_2$ if $\bm{\beta}_1\leq\bm{\beta}_2$, and $\bm{\beta}_1\neq\bm{\beta}_2$.

\begin{myl}\thlabel{equal}
Given $k \in \{1, ..., n\}$, for all $\bm{\beta}\in \bar{\mathbf{B}}_k$ and all $\mathbf{x}^*\in\opt_k(\bm{\beta})$, $\sum_{j=1}^k\mathbf{a}_j x^*_j=\bm{\beta}$.
\end{myl}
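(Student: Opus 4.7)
The plan is to prove this by contradiction, leveraging the monotonicity of $z_k$ (part (2) of \thref{elementary}) together with the definition of level-set-minimality. Specifically, I would suppose there exists some $\bbeta \in \bar{\bB}_k$ and $\mathbf{x}^* \in \opt_k(\bbeta)$ with $\sum_{j=1}^k \mathbf{a}_j x^*_j \lneq \bbeta$, and derive a strictly smaller right-hand side at which $z_k$ takes the same value.

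First, since strict inequality holds in at least one coordinate, I would pick an index $i \in \{1,\dots,m\}$ with $\left(\sum_{j=1}^k \mathbf{a}_j x^*_j\right)_i < \beta_i$, and set $\delta \coloneqq \beta_i - \left(\sum_{j=1}^k \mathbf{a}_j x^*_j\right)_i > 0$. Since $\bbeta \in \bar{\bB}_k \subset \mathbb{Z}^m_+$ and $\mathbf{A}$ is integer-valued with $\mathbf{x}^* \in \mathbb{Z}^k_+$, this $\delta$ is a positive integer. Next, I would verify that $\bbeta - \delta \mathbf{e}_i \in \cB$ by noting that subtracting $\delta$ from coordinate $i$ still leaves a value at least $\left(\sum_{j=1}^k \mathbf{a}_j x^*_j\right)_i \geq 0$, and the other coordinates are unchanged.

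Now the key observation: $\mathbf{x}^*$ remains feasible for IP$_k(\bbeta - \delta \mathbf{e}_i)$ by the choice of $\delta$, so
\begin{equation*}
z_k(\bbeta - \delta \mathbf{e}_i) \;\geq\; \sum_{j=1}^k c_j x^*_j \;=\; z_k(\bbeta).
\end{equation*}
Combined with monotonicity of $z_k$ from \thref{elementary}, which gives $z_k(\bbeta - \delta \mathbf{e}_i) \leq z_k(\bbeta)$, equality holds. But this directly contradicts \thref{minimal}, which requires $z_k(\bbeta - \delta \mathbf{e}_i) < z_k(\bbeta)$.

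I do not expect any real obstacle here: the argument is a direct application of the two ingredients (monotonicity and the definition of level-set-minimality). The only small bookkeeping point is confirming that the perturbation $\bbeta - \delta \mathbf{e}_i$ remains in $\cB$, which follows from the nonnegativity of $\sum_{j=1}^k \mathbf{a}_j x^*_j$.
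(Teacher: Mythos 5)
Your proof is correct and follows essentially the same route as the paper's: assume $\sum_{j=1}^k\mathbf{a}_jx^*_j\lneq\bbeta$, observe that $\mathbf{x}^*$ remains feasible at the smaller right-hand side so the value function cannot decrease there, and contradict level-set-minimality. You simply spell out the coordinate-wise perturbation $\bbeta-\delta\mathbf{e}_i$ that the paper's one-line argument leaves implicit.
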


\thref{LSMiter} provides a sufficient condition under which a level-set-minimal vector with $k-1$ variables maintains level-set-minimality with $k$ variables.
\begin{mypro}\thlabel{LSMiter}
Let $\bm{\beta}\in \bar{\mathbf{B}}_{k-1}$. If for all $\bar{\bm{\beta}}\lneq \bm{\beta}$ and all $\mathbf{x}^* \in \opt_k(\bar{\bm{\beta}})$, $x^*_k= 0$, then $\bm{\beta}\in \bar{\mathbf{B}}_{k}$.
\end{mypro}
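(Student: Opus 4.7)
The plan is a short direct argument: I will fix an arbitrary admissible perturbation $\bar{\bm{\beta}} = \bm{\beta} - \delta \mathbf{e}_i$ (with $\delta > 0$ and $\bar{\bm{\beta}} \in \mathcal{B}$) and show $z_k(\bar{\bm{\beta}}) < z_k(\bm{\beta})$, which by Definition of \thref{minimal} yields $\bm{\beta} \in \bar{\mathbf{B}}_k$. The idea is to convert the hypothesis on $\opt_k$ into an equality $z_k(\bar{\bm{\beta}}) = z_{k-1}(\bar{\bm{\beta}})$, and then chain it against the level-set-minimality of $\bm{\beta}$ with respect to $z_{k-1}$.

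The first step is the $x_k = 0$ reduction. Since $\bar{\bm{\beta}} \lneq \bm{\beta}$, the hypothesis tells us every $\mathbf{x}^* \in \opt_k(\bar{\bm{\beta}})$ has $x^*_k = 0$, so such an $\mathbf{x}^*$ (restricted to its first $k-1$ coordinates) is feasible for IP$_{k-1}(\bar{\bm{\beta}})$ with the same objective value, giving $z_k(\bar{\bm{\beta}}) \leq z_{k-1}(\bar{\bm{\beta}})$. The reverse inequality $z_{k-1}(\bar{\bm{\beta}}) \leq z_k(\bar{\bm{\beta}})$ is exactly \thref{nondecreasingbetweeniterations}, so $z_k(\bar{\bm{\beta}}) = z_{k-1}(\bar{\bm{\beta}})$.

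The second step uses the given level-set-minimality: because $\bm{\beta} \in \bar{\mathbf{B}}_{k-1}$ and $\delta > 0$, Definition \thref{minimal} gives $z_{k-1}(\bar{\bm{\beta}}) < z_{k-1}(\bm{\beta})$. Applying \thref{nondecreasingbetweeniterations} once more, this time at $\bm{\beta}$, yields $z_{k-1}(\bm{\beta}) \leq z_k(\bm{\beta})$. Concatenating the three inequalities gives
\[
z_k(\bar{\bm{\beta}}) = z_{k-1}(\bar{\bm{\beta}}) < z_{k-1}(\bm{\beta}) \leq z_k(\bm{\beta}),
\]
which is the desired strict decrease. Since $i$ and $\delta$ were arbitrary, $\bm{\beta}$ is level-set-minimal with respect to $z_k$.

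I do not anticipate a real obstacle here; the only subtlety to flag is that the hypothesis is only invoked at $\bar{\bm{\beta}}$ strictly below $\bm{\beta}$ (guaranteed by $\delta > 0$ together with $\mathbf{e}_i$ being a unit vector), so the hypothesis applies exactly in the regime we need. A minor sanity check is that $\opt_k(\bar{\bm{\beta}})$ is nonempty, which holds because IP$_k(\bar{\bm{\beta}})$ is feasible (take $\mathbf{x}=\mathbf{0}$) and bounded by the standing assumptions on $\mathbf{A}$ and $\mathbf{c}$.
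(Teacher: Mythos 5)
Your proof is correct and rests on exactly the same ingredients as the paper's: the hypothesis forces $x^*_k = 0$ at every $\bar{\bbeta} \lneq \bbeta$, which collapses $z_k(\bar{\bbeta})$ to $z_{k-1}(\bar{\bbeta})$, and this is then played against $\bbeta \in \bar{\mathbf{B}}_{k-1}$ using the monotonicity in $k$ from \thref{nondecreasingbetweeniterations}. The only difference is presentational — you argue directly over an arbitrary coordinate perturbation while the paper argues by contradiction from a hypothetical $\hat{\bbeta} \lneq \bbeta$ in the same level set — so the two proofs are essentially identical.
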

Note that there are some conditions under which $\bbeta \in \bar{\mathbf{B}}_{k-1}$ and $\bbeta \in \bar{\mathbf{B}}_k$ but there exists $\bar{\bbeta} \lneq \bbeta$ with $\mathbf{x}^* \in \opt_k(\bar{\bm{\beta}})$, $x^*_k > 0$. For example, if $\mathbf{a}_{k-1} = \mathbf{a}_k$ and $c_{k-1} = c_k$, then $\bar{\mathbf{B}}_{k-1} = \bar{\mathbf{B}}_k$, but any $\bar{\bbeta}$ with $\mathbf{x}^* \in \opt_k(\bar{\bm{\beta}})$, $x^*_k = 0$, $x^*_{k-1} > 0$ will also have $ \mathbf{x}^*- \mathbf{e}_{k-1}+\mathbf{e}_k$ as an optimal solution.\par
Parametrized IP \ref{EXIP} is used in examples throughout. Note that the constraint right-hand side upper bound, $\mathbf{b}$, is specified for each example, or $\cB$ is given to be unbounded above.
\begin{align*}\label{EXIP}
\max\limits_{\bx} \ &2x_1+3x_2+4x_3+3x_4+3x_5+6x_6\\
\emph{s.t.}\ &\begin{pmatrix}1 & 2 & 1 & 1 & 1 & 2\\ 1 & 1 & 2 & 1 & 3 & 2\end{pmatrix}\bx \leq \bbeta,\ \bx \in \mathbb{Z}^{6}_{+}.\tag{EXIP$(\bbeta)$}
\end{align*}
\begin{example}\thlabel{exExampleIP}
Consider \ref{EXIP} with $\mathbf{b} = (3,4)^{\top}$, and $\bbeta = (3,3)^{\top} \in \bar{\mathbf{B}}_4$. For all $\bar{\bbeta} \lneq \bbeta$ and for all $\mathbf{x}^* \in \opt_5(\bar{\bbeta})$, $x^*_5 = 0$. As such, $\bbeta \in \bar{\mathbf{B}}_5$.\hfill\qedsymbol\end{example}
\thref{LSMlinind} uses a linear independence relationship among a subset of primal variables to provide a necessary condition for level-set-minimal vectors.
\begin{mypro}\thlabel{LSMlinind}
If $\bm{\beta} \notin\bar{\mathbf{B}}_{k-1}$, $\mathbf{a}_1,\dots,\mathbf{a}_k$ are linearly independent, and $\bbeta$ and $\mathbf{a}_k$ are linearly independent, then for all $t\in \mathbb{Z}_+$ such that $\bm{\beta}+t\mathbf{a}_k\in \mathcal{B}$, $\bm{\beta}+t\mathbf{a}_k \notin \bar{\mathbf{B}}_{k}$.
\end{mypro}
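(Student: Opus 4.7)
My plan is to argue by contrapositive: assume $\bbeta + t\ba_k \in \bar{\mathbf{B}}_k$ for some $t \in \mathbb{Z}_+$ with $\bbeta + t\ba_k \in \cB$, and derive $\bbeta \in \bar{\mathbf{B}}_{k-1}$, contradicting the hypothesis. The two linear independence assumptions combine with \thref{equal} and \thref{schaeferoriginal} to rigidly tie a tight representation of $\bbeta + t\ba_k$ in the $k$-variable problem to an optimal solution of the $(k-1)$-variable problem at a shift of $\bbeta$.

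First I would apply \thref{equal}: every $\bx^* \in \opt_k(\bbeta+t\ba_k)$ satisfies $\sum_{j=1}^k \ba_j x^*_j = \bbeta + t\ba_k$, and linear independence of $\ba_1,\ldots,\ba_k$ promotes this to uniqueness of $\bx^*$ among non-negative integer vectors. Rearranging gives $\bbeta = \sum_{j=1}^{k-1} \ba_j x^*_j + (x^*_k-t)\ba_k$. Linear independence of $\bbeta$ and $\ba_k$ rules out $\sum_{j<k}\ba_j x^*_j = \bzero$, since otherwise $\bbeta$ would be a scalar multiple of $\ba_k$, so some $x^*_j$ with $j<k$ is strictly positive. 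Next, \thref{schaeferoriginal}\ref{orig.1} applied at $\bbeta + t\ba_k$ with $s=x^*_k$ yields $z_k(\bbeta+t\ba_k) = z_{k-1}(\bbeta+(t-x^*_k)\ba_k) + c_k x^*_k$, together with $(x^*_1,\ldots,x^*_{k-1}) \in \opt_{k-1}(\bbeta+(t-x^*_k)\ba_k)$ and tight constraints there.

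From $\bbeta \notin \bar{\mathbf{B}}_{k-1}$, pick $i \in \{1,\ldots,m\}$ and $\delta > 0$ with $\bbeta - \delta \mathbf{e}_i \in \cB$ and $z_{k-1}(\bbeta-\delta\mathbf{e}_i) = z_{k-1}(\bbeta)$, and let $\mathbf{y}^* \in \opt_{k-1}(\bbeta-\delta\mathbf{e}_i)$. Then $(\mathbf{y}^*, t) \in \mathbb{Z}^k_+$ is feasible for IP$_k(\bbeta + t\ba_k - \delta\mathbf{e}_i)$ with objective $z_{k-1}(\bbeta) + tc_k$. If I can establish $z_k(\bbeta+t\ba_k) = z_{k-1}(\bbeta)+tc_k$, then monotonicity (\thref{elementary}(2)) forces $z_k(\bbeta+t\ba_k-\delta\mathbf{e}_i) = z_k(\bbeta+t\ba_k)$, contradicting $\bbeta+t\ba_k \in \bar{\mathbf{B}}_k$.

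The main obstacle is this last equality, which amounts to forcing $x^*_k = t$ in the unique decomposition of $\bbeta+t\ba_k$. This is precisely where the linear independence of $\bbeta$ and $\ba_k$, coupled with that of $\ba_1,\ldots,\ba_k$, must be leveraged to rule out decompositions that shift mass between $\ba_k$ and the other generators; without it, \thref{schaeferoriginal} and superadditivity (\thref{elementary}(3)) yield only the weaker bound $z_{k-1}(\bbeta)+tc_k \leq z_k(\bbeta+t\ba_k)$. Completing this step cleanly likely requires a careful case split on the sign of $x^*_k - t$, using the forbidden alignment of $\bbeta$ with $\ba_k$ to exclude the case $x^*_k \neq t$.
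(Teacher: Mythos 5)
Your overall strategy coincides with the paper's: assume $\bbeta+t\ba_k\in\bar{\mathbf{B}}_k$, use \thref{equal} together with linear independence of $\ba_1,\dots,\ba_k$ to obtain the unique tight decomposition of $\bbeta+t\ba_k$, extract from $\bbeta\notin\bar{\mathbf{B}}_{k-1}$ a right-hand side $\bpi\lneq\bbeta$ with $z_{k-1}(\bpi)=z_{k-1}(\bbeta)$, and then exhibit an optimal solution of IP$_k(\bbeta+t\ba_k)$ that is already feasible at $\bpi+t\ba_k\lneq\bbeta+t\ba_k$, contradicting level-set minimality. However, you stop short of the one step that carries all the content: showing that the (unique) optimal solution $\bx^*$ of IP$_k(\bbeta+t\ba_k)$ has $x^*_k=t$, equivalently that $z_k(\bbeta+t\ba_k)=z_{k-1}(\bbeta)+tc_k$. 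You flag this as ``the main obstacle'' and say it ``likely requires a careful case split on the sign of $x^*_k-t$,'' but you never carry that out. As written, the proposal is a plan with its central step missing, so it is not yet a proof.

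Your hesitation at that step is well founded, and this is where your attempt diverges from the paper: the paper's proof disposes of it in one clause (``because $\ba_1,\dots,\ba_k$ are linearly independent, $x^2_k=t$''), but linear independence of the columns only yields \emph{uniqueness} of the representation $\sum_{j=1}^k\ba_jx^*_j=\bbeta+t\ba_k$; it does not force the $\ba_k$-coefficient to equal $t$ unless $\bbeta$ itself has no $\ba_k$-component in its representation over $\ba_1,\dots,\ba_k$. Pairwise independence of $\bbeta$ and $\ba_k$ only excludes the degenerate case $\sum_{j<k}\ba_jx^*_j=\bzero$, exactly as you observe. Indeed, the step can genuinely fail under the stated hypotheses: take $m=k=2$, $\ba_1=\mathbf{e}_1$, $\ba_2=\mathbf{e}_2$, $c_1=c_2=1$, and $\bbeta=(1,1)^{\top}$. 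Then $\bbeta\notin\bar{\mathbf{B}}_1$ (since $z_1$ ignores the second coordinate), $\ba_1,\ba_2$ are linearly independent, and $\bbeta,\ba_2$ are linearly independent, yet the unique tight decomposition of $\bbeta+t\ba_2$ uses $\ba_2$ with coefficient $t+1$, and $\bbeta+t\ba_2$ is level-set-minimal for $z_2$. So the gap you identified cannot be closed from the stated hypotheses alone; any completion of your argument needs an additional assumption controlling the $\ba_k$-coefficient in the decomposition of $\bbeta$ itself (e.g., that $\bbeta$ lies in the span of $\ba_1,\dots,\ba_{k-1}$ alone, or outside the span of $\ba_1,\dots,\ba_k$ entirely), not merely pairwise independence of $\bbeta$ and $\ba_k$.
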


\thref{LSMiter,LSMlinind} provide conditions for which a right-hand side is level-set-minimal as primal variables are added. In contrast, \thref{schaefer} gives a sufficient condition such that a right-hand side is level-set-minimal as primal variables are removed.

\begin{mypro}\thlabel{schaefer}
If $\bm{\beta}\in \bar{\mathbf{B}}_{k}$ and $\mathbf{x}^*\in \opt_k(\bm{\beta})$, then $\bm{\beta}-\mathbf{a}_kx^*_k\in \bar{\mathbf{B}}_{k-1}.$ 
\end{mypro}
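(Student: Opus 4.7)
\textbf{Proof plan for \thref{schaefer}.} The plan is to argue by contradiction, exploiting \thref{schaeferoriginal}\ref{orig.3} to translate level-set information from $z_k$ to $z_{k-1}$, and then reversing the augmentation to obtain a contradiction with the level-set-minimality of $\bbeta$ for $z_k$. First I would fix notation: let $\bbeta' \coloneqq \bbeta - \ba_k x_k^*$, and let $\alpha \coloneqq z_k(\bbeta)$. By \thref{equal}, $\bbeta \in \bar{\mathcal{B}}$, and since $x_k^*,\,\ba_k \in \mathbb{Z}_+^m$ with $\bbeta' \geq \bzero$ (feasibility of $\mathbf{x}^*$), we have $\bbeta' \in \bar{\mathcal{B}}$ as well, so by the Remark following \thref{minimal} it suffices to check level-set-minimality using the unit-step decrement $\mathbf{e}_i$. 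Invoking \thref{schaeferoriginal}\ref{orig.3} gives $\bbeta' \in S_{k-1}(\alpha - c_k x_k^*)$, i.e., $z_{k-1}(\bbeta') = \alpha - c_k x_k^*$.

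Next, assume for contradiction that $\bbeta' \notin \bar{\mathbf{B}}_{k-1}$. Then there exists $i \in \{1,\dots,m\}$ with $\bbeta' - \mathbf{e}_i \in \bar{\mathcal{B}}$ such that $z_{k-1}(\bbeta' - \mathbf{e}_i) \geq z_{k-1}(\bbeta')$; monotonicity (\thref{elementary}(2)) forces equality, so $z_{k-1}(\bbeta' - \mathbf{e}_i) = \alpha - c_k x_k^*$. I would then lift an optimal solution of IP$_{k-1}(\bbeta' - \mathbf{e}_i)$ back to the $k$-variable problem: pick $\tilde{\mathbf{x}} \in \opt_{k-1}(\bbeta' - \mathbf{e}_i)$ and define $\hat{\mathbf{x}} \coloneqq (\tilde x_1,\dots,\tilde x_{k-1},\, x_k^*) \in \mathbb{Z}_+^k$. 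Feasibility of $\hat{\mathbf{x}}$ for IP$_k(\bbeta - \mathbf{e}_i)$ follows from $\sum_{j=1}^{k-1}\ba_j \tilde x_j + \ba_k x_k^* \leq (\bbeta' - \mathbf{e}_i) + \ba_k x_k^* = \bbeta - \mathbf{e}_i$, and its objective value equals $(\alpha - c_k x_k^*) + c_k x_k^* = \alpha$.

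Hence $z_k(\bbeta - \mathbf{e}_i) \geq \alpha = z_k(\bbeta)$; combined with monotonicity this gives $z_k(\bbeta - \mathbf{e}_i) = z_k(\bbeta)$, contradicting $\bbeta \in \bar{\mathbf{B}}_k$. The final bookkeeping is to confirm $\bbeta - \mathbf{e}_i \in \bar{\mathcal{B}}$: since $\bbeta \geq \bbeta'$ componentwise and $\bbeta' - \mathbf{e}_i \in \bar{\mathcal{B}}$, we have $\bbeta - \mathbf{e}_i \geq \bbeta' - \mathbf{e}_i \geq \bzero$ and $\bbeta - \mathbf{e}_i \leq \bbeta \leq \mathbf{b}$.

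I do not anticipate a real obstacle here; the proof is essentially a one-line ``augment-and-contradict'' argument once \thref{schaeferoriginal}\ref{orig.3} supplies the value of $z_{k-1}(\bbeta')$. The subtlest point to get right is the reduction from the real-valued $\delta > 0$ in \thref{minimal} to the integer decrement $\mathbf{e}_i$, which is justified by $\bbeta' \in \bar{\mathcal{B}}$ together with $z(\bbeta) = z(\lfloor \bbeta \rfloor)$; everything else is direct verification of feasibility and objective value of the lifted solution.
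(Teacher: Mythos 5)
Your proof is correct and follows essentially the same route as the paper's: both argue by contradiction and append $x_k^*$ to an optimal solution of the $(k{-}1)$-variable problem at a right-hand side strictly below $\bbeta - \ba_k x_k^*$, yielding a violation of the level-set-minimality of $\bbeta$. The only cosmetic differences are that you decrement by a unit vector $\mathbf{e}_i$ (justified via the Remark after \thref{minimal}) and contradict the definition of level-set-minimality directly, whereas the paper takes an arbitrary $\bpi \lneq \bbeta - \ba_k x_k^*$ and reaches the contradiction through \thref{equal}.
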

\begin{proof}
Suppose $\bbeta - \ba_{k}x^{*}_{k} \not\in \barB_{k-1}$, then there exists $\bpi \in \barB_{k-1}$ such that $\bpi \lneq \bbeta - \ba_{k}x^{*}_{k}$ and $z_{k-1}(\bpi) = z_{k-1}(\bbeta - \ba_{k}x^{*}_{k})$. Let  $\bar{\bx} \in \opt_{k-1}(\bpi)$. Because $\bpi \lneq \bbeta - \ba_{k}x^{*}_{k},$ and $\sum\limits_{j = 1}^{k-1}c_{j}\bar{x}_{j} = z_{k-1}(\bpi) = z_{k-1}(\bbeta - \ba_{k}x^{*}_{k}),$ we have $\bar{\bx} \in \opt_{k-1}(\bbeta - \ba_{k}x^{*}_{k})$. Moreover, ($x^{*}_{1},\dots,x^{*}_{k-1})^{\top}$ is a feasible solution to IP$_{k-1}(\bbeta - \ba_{k}x^{*}_{k})$, which implies that $\slj^{k-1}c_{j}\bar{x}_{j} \geq \slj^{k-1}c_{j}x^{*}_{j}$. 

Let $\hat{\bx} =  (\bar{x}_{1},\dots,\bar{x}_{k-1}, x^{*}_{k})^{\top}$. Then $\slj^{k}c_{j}\hat{x}_{j} = \slj^{k-1}c_{j}\bar{x}_{j} + c_{k}x^{*}_{k} \geq \slj^{k}c_{j}x^{*}_{j}$, and because $\bx^{*} \in \opt_k(\bbeta), \hat{\bx} \in \opt_k(\bbeta)$. However, $\slj^{k}\ba_{j}\hat{x}_{j} = \slj^{k-1}\ba_{j}\bar{x}_{j} + \ba_{k}x^{*}_{k} = \bpi + \ba_{k}x^{*}_{k} \lneq \bbeta$, which implies that $\bbeta \not\in \barB_{k}$, a contradiction. 
\end{proof}
\begin{continueexample}{exExampleIP}
In (EXIP), if $\mathbf{b} = (3,4)^{\top}$, then $\bbeta = (3,3)^{\top} \in \bar{\mathbf{B}}_3$, and $(0,1,1)^{\top} \in \opt_3(\bbeta)$. Hence, $(3,3)^{\top}-\mathbf{a}_3 = (2,1)^{\top} \in \bar{\mathbf{B}}_2$.\hfill\qedsymbol\end{continueexample}

\begin{mypro}\thlabel{down}
If $\mathbf{x}^* \in \opt_k(\sum_{j=1}^k\mathbf{a}_jx^*_j)$ and $\sum_{j=1}^k\mathbf{a}_j x_j^*\in\bar{\mathbf{B}}_k$, then for all $\mathbf{x}\in\mathbb{Z}_+^k$ such that $\mathbf{x}\lneq \mathbf{x}^*$, \color{black} we have  $\sum_{j=1}^k\mathbf{a}_j{x}_j\in\bar{\mathbf{B}}_k$.

\end{mypro}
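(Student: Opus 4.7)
The plan is to argue by contradiction in the spirit of the proof of \thref{schaefer}. Write $\bbeta^{*} := \sum_{j=1}^{k} \mathbf{a}_{j} x_{j}^{*}$ and $\bbeta := \sum_{j=1}^{k} \mathbf{a}_{j} x_{j}$. Since $\mathbf{x} \leq \mathbf{x}^{*}$ and $\mathbf{x}^{*} \in \opt_{k}(\bbeta^{*})$, \thref{IPCS} yields $\mathbf{x} \in \opt_{k}(\bbeta)$; in particular $z_{k}(\bbeta) = \sum_{j=1}^{k} c_{j} x_{j}$.

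Next, suppose toward a contradiction that $\bbeta \notin \bar{\mathbf{B}}_{k}$. Because $\bbeta$ is integral, the remark following \thref{minimal}, combined with the monotonicity of $z_{k}$ from \thref{elementary}, yields some index $i$ such that $\bpi := \bbeta - \mathbf{e}_{i} \in \bar{\mathcal{B}}$ and $z_{k}(\bpi) = z_{k}(\bbeta)$. Pick any $\bar{\mathbf{x}} \in \opt_{k}(\bpi)$, so that $\sum_{j=1}^{k} c_{j} \bar{x}_{j} = z_{k}(\bbeta)$ and $\sum_{j=1}^{k} \mathbf{a}_{j} \bar{x}_{j} \leq \bpi$.

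Now form the composite solution $\hat{\mathbf{x}} := \bar{\mathbf{x}} + (\mathbf{x}^{*} - \mathbf{x}) \in \mathbb{Z}_{+}^{k}$, which is well-defined since $\mathbf{x} \leq \mathbf{x}^{*}$. A short computation gives
\[ \sum_{j=1}^{k} \mathbf{a}_{j} \hat{x}_{j} \leq \bpi + (\bbeta^{*} - \bbeta) \lneq \bbeta^{*}, \qquad \sum_{j=1}^{k} c_{j} \hat{x}_{j} = z_{k}(\bpi) + z_{k}(\bbeta^{*}) - z_{k}(\bbeta) = z_{k}(\bbeta^{*}). \]
Thus $\hat{\mathbf{x}}$ is feasible for IP$_{k}(\bbeta^{*})$, attains the optimal value $z_{k}(\bbeta^{*})$, and its induced right-hand side is strictly smaller than $\bbeta^{*}$ in coordinate $i$. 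Coupled with the monotonicity of $z_{k}$, this contradicts the assumption $\bbeta^{*} \in \bar{\mathbf{B}}_{k}$.

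The main obstacle is identifying the right witness $\hat{\mathbf{x}}$: one has to splice an optimal $\bar{\mathbf{x}}$ at the strictly smaller right-hand side $\bpi$ with the ``residual'' $\mathbf{x}^{*} - \mathbf{x}$, and then verify through IPCS-style objective bookkeeping that the composite solution both attains $z_{k}(\bbeta^{*})$ and witnesses a strict coordinate reduction below $\bbeta^{*}$. Once that splicing is in place the contradiction with the level-set-minimality of $\bbeta^{*}$ is immediate.
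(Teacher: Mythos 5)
Your proof is correct and follows essentially the same strategy as the paper's: assume some $\mathbf{x}\lneq\mathbf{x}^*$ yields a right-hand side that is not level-set-minimal, extract an optimal solution at a strictly smaller right-hand side with the same value, splice it with the residual $\mathbf{x}^*-\mathbf{x}$, and show the composite is optimal for $\sum_{j=1}^k\mathbf{a}_jx^*_j$ while inducing a strictly smaller right-hand side, contradicting level-set-minimality (the paper phrases the final contradiction via Lemma~\ref{equal}, you via monotonicity — a cosmetic difference). The bookkeeping in your displayed computation is accurate, so no changes are needed.
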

\begin{proof}Suppose there exists $\bx^{1} \lneq {\bx^*}$ such that $\slj^{k}\ba_{j}x^{1}_{j} \not\in \barB_{k}.$ Then there exists $\bx^{2} \in \opt_k(\slj^{k}\ba_{j}x^{1}_{j})$ such that $\slj^{k}\ba_{j}x^{2}_{j} \lneq \slj^{k}\ba_{j}x^{1}_{j}$. Let $\bx^{3} = \bx^{2} + ({\bx^*} - \bx^{1})$. Then, $\slj^{k} \ba_{j}x^{3}_{j} = \slj^{k}\ba_{j}x^{2}_{j} + \slj^{k}\ba_{j}{x}^*_{j} - \slj^{k}\ba_{j}x^{1}_{j} \lneq \slj^{k} \ba_{j}{x}_{j}^*.$ Because $\bx^{2} \in \opt_k(\slj^{k} \ba_{k}x^{1}_{j})$, $\slj^{k} c_{j}x^{3}_{j} = \slj^{k}c_{j}(x^{2}_{j} + {x}^*_{k} - x^{1}_{j}) \geq \slj^{k} c_{j}{x}^*_{j}$, and because ${\bx^*} \in \opt_k(\slj^{k}\ba_{j}{x}^*_{j}), \bx^{3} \in \opt_k(\slj\ba_{j}{x}^*_{j})$. However, $\slj^{k} \ba_{j}x^{3}_{j} \lneq \slj^{k}\ba_{j}{x}^*_{j}$, which contradicts \thref{equal}.
\end{proof}

\begin{mypro}\thlabel{anotinB}
Let $\bm{\beta}\in\bar{\mathbf{B}}_k$. If $\mathbf{a}_j\notin \bar{\mathbf{B}}_k$, then for all $ \mathbf{x}^* \in\opt_k(\bm{\beta})$, $x^*_j=0$.
\end{mypro}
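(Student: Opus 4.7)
The plan is to argue by contradiction, using \thref{down} as the main tool. Suppose there exists $\mathbf{x}^{*}\in\opt_{k}(\bbeta)$ with $x^{*}_{j}>0$. Since $\bbeta\in\bar{\mathbf{B}}_{k}$, \thref{equal} gives $\bbeta=\sum_{i=1}^{k}\mathbf{a}_{i}x^{*}_{i}$, so the hypotheses of \thref{down} are satisfied with this $\mathbf{x}^{*}$. I would then choose the candidate vector $\mathbf{x}:=\mathbf{e}_{j}\in\mathbb{Z}_{+}^{k}$, which satisfies $\mathbf{e}_{j}\leq\mathbf{x}^{*}$ because $x^{*}_{j}\geq 1$.

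The argument then splits into two cases based on whether $\mathbf{e}_{j}$ is strictly dominated by $\mathbf{x}^{*}$. In the first case, $\mathbf{e}_{j}=\mathbf{x}^{*}$, and \thref{equal} forces $\bbeta=\mathbf{a}_{j}$; but then the assumptions $\bbeta\in\bar{\mathbf{B}}_{k}$ and $\mathbf{a}_{j}\notin\bar{\mathbf{B}}_{k}$ directly conflict. In the second case, $\mathbf{e}_{j}\lneq\mathbf{x}^{*}$, so \thref{down} applies and yields $\sum_{i=1}^{k}\mathbf{a}_{i}(\mathbf{e}_{j})_{i}=\mathbf{a}_{j}\in\bar{\mathbf{B}}_{k}$, again contradicting the hypothesis $\mathbf{a}_{j}\notin\bar{\mathbf{B}}_{k}$. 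Either way we reach a contradiction, establishing $x^{*}_{j}=0$.

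I do not anticipate any substantive obstacle here: the essential work is already encapsulated in \thref{down}, and the proof reduces to recognizing $\mathbf{e}_{j}$ as the right sub-solution to feed into it. The only minor technicality is the edge case $\mathbf{x}^{*}=\mathbf{e}_{j}$, where $\lneq$ fails and \thref{down} cannot be invoked directly, which is why I would handle it separately via \thref{equal}.
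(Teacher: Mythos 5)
Your proof is correct and matches the paper's own argument essentially step for step: both handle the edge case $\mathbf{x}^{*}=\mathbf{e}_{j}$ via \thref{equal} and otherwise feed $\mathbf{e}_{j}\lneq\mathbf{x}^{*}$ into \thref{down} to contradict $\mathbf{a}_{j}\notin\bar{\mathbf{B}}_{k}$. Your explicit verification (via \thref{equal}) that the hypotheses of \thref{down} hold is a nice touch the paper leaves implicit.
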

\begin{cor}\thlabel{relationship}
$\bar{\mathbf{B}}_k\subseteq \left\{\bm{\beta}\in\bar{\mathcal{B}}\,\vert\, \bm{\beta}=\hat{\bm{\beta}}+t\mathbf{a}_k,\hat{\bm{\beta}}\in\bar{\mathbf{B}}_{k-1},t\in\mathbb{Z}_+ \right\}$.
\end{cor}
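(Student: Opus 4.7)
The plan is to derive this corollary directly from \thref{schaefer}, together with \thref{equal} to handle the integrality condition $\bbeta \in \bar{\mathcal{B}}$. The corollary is essentially a repackaging of \thref{schaefer}: it says that every level-set-minimal vector in $\bar{\mathbf{B}}_k$ decomposes as a level-set-minimal vector in $\bar{\mathbf{B}}_{k-1}$ plus some nonnegative integer multiple of $\mathbf{a}_k$.

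First, I fix an arbitrary $\bbeta \in \bar{\mathbf{B}}_{k}$ and pick any optimal solution $\bx^{*} \in \opt_k(\bbeta)$. By \thref{equal}, $\bbeta = \sum_{j=1}^{k} \mathbf{a}_{j}x^{*}_{j}$; since each $\mathbf{a}_{j} \in \mathbb{Z}^{m}_{+}$ and $x^*_j \in \mathbb{Z}_+$, this immediately gives $\bbeta \in \mathbb{Z}^{m}_{+}$, and combined with $\bbeta \in \mathcal{B}$ we get $\bbeta \in \bar{\mathcal{B}}$. Next, I apply \thref{schaefer} to conclude that $\hat{\bbeta} \coloneqq \bbeta - \mathbf{a}_{k}x^{*}_{k} \in \bar{\mathbf{B}}_{k-1}$. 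Finally, setting $t \coloneqq x^{*}_{k} \in \mathbb{Z}_{+}$ yields the desired decomposition $\bbeta = \hat{\bbeta} + t\mathbf{a}_{k}$ with $\hat{\bbeta} \in \bar{\mathbf{B}}_{k-1}$ and $t \in \mathbb{Z}_{+}$, so $\bbeta$ lies in the set on the right-hand side.

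There is no real obstacle here, since all of the work has already been done by \thref{schaefer} (the decomposition step) and \thref{equal} (the integrality of $\bbeta$). The only minor points to be careful about are (i) ensuring $t = x^*_k$ is nonnegative integer (immediate from $\bx^* \in \mathbb{Z}^{k}_{+}$), and (ii) explicitly noting that $\hat{\bbeta} = \bbeta - \mathbf{a}_k x^*_k$ still lies in $\bar{\mathcal{B}}$ so that membership in $\bar{\mathbf{B}}_{k-1}$ makes sense, which follows from $\hat{\bbeta} = \sum_{j=1}^{k-1} \mathbf{a}_{j}x^{*}_{j} \in \mathbb{Z}^{m}_{+}$ and $\hat{\bbeta} \leq \bbeta \leq \mathbf{b}$.
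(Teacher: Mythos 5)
Your proof is correct and follows essentially the same route as the paper's: invoke Lemma \thref{equal} to write $\bbeta = \sum_{j=1}^{k}\mathbf{a}_j x^*_j$ for an optimal $\bx^*$, then apply \thref{schaefer} to get $\bbeta - \mathbf{a}_k x^*_k \in \bar{\mathbf{B}}_{k-1}$ and take $t = x^*_k$. Your extra care about the integrality of $\bbeta$ and $\hat{\bbeta}$ is a welcome but minor elaboration of what the paper leaves implicit.
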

Propositions 7 and 8 state sufficient and necessary conditions, respectively, to ensure that subsets of right-hand sides are all level-set-minimal. \thref{down} requires a known optimal solution and generalizes a result of \cite{Trapp2014}, who prove the case of $k=n$. In contrast, \thref{anotinB} relies on the value function.

\begin{mypro}\thlabel{downt}
If there exists $t\in \mathbb{Z}_+$ such that $\bm{\beta}+t\mathbf{a}_k\in  \bar{\mathbf{B}}_{k}$ and $z_k(\bm{\beta}+t\mathbf{a}_k)=z_k(\bm{\beta})+tc_k$, then for all $s\leq t,s\in\mathbb{Z}_+$, $\bm{\beta}+s\mathbf{a}_k\in  \bar{\mathbf{B}}_{k}$.
\end{mypro}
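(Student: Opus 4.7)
The plan is to reduce the claim to \thref{down} by exhibiting an optimal solution at $\bbeta+t\ba_k$ whose $k$-th coordinate is at least $t$, and then peeling off copies of $\ba_k$ one at a time. First I would pick $\mathbf{y}^{*}\in\opt_k(\bbeta)$, so $\sum_{j=1}^k c_j y^{*}_j=z_k(\bbeta)$ and $\sum_{j=1}^k\ba_j y^{*}_j\leq\bbeta$. Set $\bx^{*}:=\mathbf{y}^{*}+t\mathbf{e}_k$. This is a nonnegative integer vector that is feasible for IP$_k(\bbeta+t\ba_k)$ and has objective value $z_k(\bbeta)+tc_k$, which by hypothesis equals $z_k(\bbeta+t\ba_k)$. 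Hence $\bx^{*}\in\opt_k(\bbeta+t\ba_k)$.

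The first key observation is that because $\bbeta+t\ba_k\in\bar{\mathbf{B}}_k$, \thref{equal} forces $\sum_{j=1}^k\ba_j x^{*}_j=\bbeta+t\ba_k$, so in fact $\sum_{j=1}^k\ba_j y^{*}_j=\bbeta$ exactly (not merely $\leq\bbeta$). Consequently, for each $s\in\{0,1,\ldots,t\}$, the vector $\bx^{(s)}:=\mathbf{y}^{*}+s\mathbf{e}_k\in\mathbb{Z}_+^k$ satisfies $\sum_{j=1}^k\ba_j x^{(s)}_j=\bbeta+s\ba_k$.

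For the conclusion, the case $s=t$ is given by hypothesis. For each $s\in\{0,1,\ldots,t-1\}$, $\bx^{(s)}\lneq\bx^{*}$ (strict in coordinate $k$), while $\bx^{*}\in\opt_k\!\bigl(\sum_{j=1}^k\ba_j x^{*}_j\bigr)$ and $\sum_{j=1}^k\ba_j x^{*}_j=\bbeta+t\ba_k\in\bar{\mathbf{B}}_k$. Hence the hypotheses of \thref{down} are met, and it yields $\bbeta+s\ba_k=\sum_{j=1}^k\ba_j x^{(s)}_j\in\bar{\mathbf{B}}_k$, completing the argument.

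There is no deep obstacle: the only subtlety is constructing an optimal solution at $\bbeta+t\ba_k$ that visibly uses $\ba_k$ at least $t$ times, which is precisely what the value-matching hypothesis $z_k(\bbeta+t\ba_k)=z_k(\bbeta)+tc_k$ licenses, combined with \thref{equal} to promote the inequality $\sum_j\ba_j y^{*}_j\le\bbeta$ to equality. Once that alignment is in place, \thref{down} discharges the rest mechanically, so the proof reduces to writing down $\bx^{(s)}$ and invoking the two earlier results.
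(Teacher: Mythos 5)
Your proof is correct, but it takes a genuinely different route from the paper's. The paper argues by contradiction using only the elementary properties of \thref{elementary}: if $\bbeta + s\ba_k \notin \bar{\mathbf{B}}_k$ for some $s < t$, there is a witness $\bpi \lneq \bbeta + s\ba_k$ with the same value, and superadditivity gives $z_k(\bpi + (t-s)\ba_k) \geq z_k(\bpi) + (t-s)c_k \geq z_k(\bbeta) + tc_k = z_k(\bbeta + t\ba_k)$, so $\bpi + (t-s)\ba_k \lneq \bbeta + t\ba_k$ contradicts $\bbeta + t\ba_k \in \bar{\mathbf{B}}_k$ via monotonicity. You instead work on the primal side: the value-matching hypothesis lets you exhibit $\bx^* = \mathbf{y}^* + t\mathbf{e}_k \in \opt_k(\bbeta + t\ba_k)$, \thref{equal} upgrades $\sum_j \ba_j y^*_j \leq \bbeta$ to equality, and then \thref{down} applied to $\bx^{(s)} = \mathbf{y}^* + s\mathbf{e}_k \lneq \bx^*$ delivers $\bbeta + s\ba_k \in \bar{\mathbf{B}}_k$ directly. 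Every step checks out, including the edge cases ($s = t$ is the hypothesis; $t = 0$ is vacuous). The trade-off: the paper's argument is self-contained and purely value-function-based (superadditivity plus monotonicity), whereas yours reuses the machinery of \thref{equal} and \thref{down} and, as a by-product, produces explicit optimal solutions at each intermediate right-hand side $\bbeta + s\ba_k$ — a slightly stronger, constructive conclusion in the spirit of the paper's primal theme.
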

\begin{proof}
Suppose there exists $s< t,s\in\mathbb{Z}_+$ such that $\bm{\beta}+s\mathbf{a}_k\notin  \bar{\mathbf{B}}_{k}$. Then there exists $\bm{\pi}\lneq\bm{\beta}+s\mathbf{a}_k$ such that $z_k(\bm{\pi})=z_k(\bm{\beta}+s\mathbf{a}_k)$. Then by superadditivity, 
\[z_k(\bm{\pi}+(t-s)\mathbf{a}_k)\geq z_k(\bm{\pi})+(t-s)c_k= z_k(\bm{\beta}+s\mathbf{a}_k)+(t-s)c_k\geq z_k(\bm{\beta})+tc_k=z_k(\bm{\beta}+t\mathbf{a}_k).\]
Because  $\bm{\pi}\lneq\bm{\beta}+s\mathbf{a}_k$ , $\bm{\pi}+(t-s)\mathbf{a}_k\lneq\bm{\beta}+t\mathbf{a}_k$, and $z_{k}(\bm{\pi} + (t - s)\ba_{k}) \geq z_{k}(\bbeta + t\ba_{k})$, then by monotonicity, $z_{k}(\bbeta + (t - s)\ba_{k}) \geq z_{k}(\bbeta + t\ba_{k})$, so that $\bbeta + t\ba_{k} \notin \bar{\mathbf{B}}_{k}$, a contradiction. 
\end{proof}

\begin{remark}
If $\bm{\beta}\in \bar{\mathbf{B}}_{k-1}$ and $\bm{\beta}\notin \bar{\mathbf{B}}_{k}$, then for all $t\in \mathbb{Z}_+$ such that $\bm{\beta}+t\mathbf{a}_k\in \bar{\mathcal{B}}$, either
\begin{enumerate}[label = (\arabic*)]
\item $\bm{\beta}+t\mathbf{a}_k\notin \bar{\mathbf{B}}_{k}$; or
\item $\bm{\beta}+t\mathbf{a}_k\in \bar{\mathbf{B}}_{k}$, $z_k(\bm{\beta}+t\mathbf{a}_k)>z_k(\bm{\beta})+tc_k$.  Further, there exist $\hat{\bm{\beta}}\in \bar{\mathbf{B}}_{k},\,\hat{t}\in \mathbb{Z}_+$ such that $\bm{\beta}+t\mathbf{a}_k=\hat{\bm{\beta}}+\hat{t}\mathbf{a}_k$ and $z_k(\bm{\beta}+t\mathbf{a}_k)=z_k(\hat{\bm{\beta}})+\hat{t}c_k$. 
\end{enumerate}
\end{remark}

Therefore, level-set-minimal vectors can be obtained by adding primal variables and the corresponding columns of $\mathbf{A}$ to the problem one at a time.

\subsection{Order of Primal Decision Variables}\label{varord}
We have shown a number of properties of value functions and level-set-minimal vectors when we introduce variables $x_{k}$ for a given ordering ($k = 1,2,\dots,n$) one at a time. However, if the ordering provided is arbitrary, then at each step $k$, it must first be determined if $\mathbf{a}_k$ should be added into the problem, or if it is strictly dominated by other variables and can be excluded from consideration. Here, we consider which variables are necessary to include to obtain an optimal solution; note that the order in which the decision variables are added can influence the construction of the level sets.  \par
\begin{myl}\cite{wolsey1981integer}\thlabel{>0}
If $z_n(\mathbf{a}_j)>c_j$, then for all $\bm{\beta} \in\mathcal{B}$ and all $\mathbf{x}^*\in \opt_n(\bm{\beta})$, $x^*_j=0$. 
\end{myl}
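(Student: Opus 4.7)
The plan is to argue by contradiction using the IP complementary slackness condition from \thref{IPCS}. Suppose there exist $\bbeta \in \mathcal{B}$ and $\mathbf{x}^* \in \opt_n(\bbeta)$ with $x_j^* \geq 1$. Then the unit vector $\mathbf{e}_j$ satisfies $\mathbf{e}_j \in \mathbb{Z}_+^n$ and $\mathbf{e}_j \leq \mathbf{x}^*$, so \thref{IPCS} (with $\mathbf{x} = \mathbf{e}_j$) immediately yields $z_n(\mathbf{a}_j) = c_j$, contradicting the hypothesis $z_n(\mathbf{a}_j) > c_j$.

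First I would state the contrapositive setup cleanly, then invoke \thref{IPCS} with the specific choice $\mathbf{x} = \mathbf{e}_j$, noting that $\sum_{i=1}^n \mathbf{a}_i (\mathbf{e}_j)_i = \mathbf{a}_j$ and $\sum_{i=1}^n c_i (\mathbf{e}_j)_i = c_j$, so the complementary slackness identity reduces exactly to $z_n(\mathbf{a}_j) = c_j$. The contradiction with the strict inequality in the hypothesis then forces $x_j^* = 0$. Since $\bbeta$ and $\mathbf{x}^*$ were arbitrary, the statement follows.

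There is no real obstacle here; the only subtlety is recognizing that \thref{IPCS} is strong enough to apply to the \emph{single-variable} subvector $\mathbf{e}_j$ of any optimal solution in which $x_j^* \geq 1$, which is precisely what turns a bound on $z_n(\mathbf{a}_j)$ into a structural constraint on every optimal solution over every right-hand side in $\mathcal{B}$.
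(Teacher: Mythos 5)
Your proof is correct. The paper states this lemma as a cited result from \cite{wolsey1981integer} and gives no proof of its own, but your argument---applying \thref{IPCS} with $k = n$ and $\mathbf{x} = \mathbf{e}_j \leq \mathbf{x}^*$ to obtain $z_n(\mathbf{a}_j) = c_j$, contradicting the strict inequality---is exactly the standard derivation and is valid, with the only point worth making explicit being that $z_n(\mathbf{a}_j)$ is well-defined because the paper assumes $\mathbf{a}_j \leq \mathbf{b}$, so $\mathbf{a}_j \in \mathcal{B}$.
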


By \thref{>0} and \thref{anotinB}, only vectors $\mathbf{a}_j\in \bar{\mathbf{B}}$ such that $z_n(\mathbf{a}_j)=c_j$ are necessary to find an optimal solution for a given right-hand side.
\par
\begin{myl}\thlabel{LSMkton}
Suppose $\ba_{k} \in \bar{\bB}_{k}$ and $z_{k}(\ba_{k}) = c_{k}$, for some $k \in \{1,\dots,n\}$. Further suppose that for all $\hat{k} \in \mathbb{Z}_{+}$ such that $k < \hat{k} \leq n$, we have $\ba_{k} - \ba_{\hat{k}} \not\in \cB$. Then, $z_{n}(\ba_{k}) = c_{k}$ and $\ba_{k} \in \bar{\bB}_n$.
\end{myl}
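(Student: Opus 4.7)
The plan is to prove a single reduction lemma and read off both conclusions from it: for every $\bbeta \in \cB$ with $\bbeta \leq \ba_k$, the problem IP$_n(\bbeta)$ has the same value as IP$_k(\bbeta)$, because no feasible solution can activate any variable with index greater than $k$.

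The first step is to translate the third hypothesis ``$\ba_k - \ba_{\hat{k}} \notin \cB$ for every $\hat{k} > k$'' into the more usable statement ``$\ba_{\hat{k}} \not\leq \ba_k$ for every $\hat{k} > k$''. This translation is where the standing assumption $\ba_j \leq \mathbf{b}$ is essential: because $\ba_k - \ba_{\hat{k}} \leq \ba_k \leq \mathbf{b}$ componentwise, the only way for $\ba_k - \ba_{\hat{k}}$ to leave $\cB$ is to have a strictly negative component. Next, for any $\bbeta \leq \ba_k$ in $\cB$ and any feasible $\mathbf{x}$ of IP$_n(\bbeta)$, I would observe that if $x_{\hat{k}} \geq 1$ for some $\hat{k} > k$ then by nonnegativity $\ba_{\hat{k}} \leq \ba_{\hat{k}} x_{\hat{k}} \leq \sum_{j=1}^n \ba_j x_j \leq \bbeta \leq \ba_k$, contradicting the blocking property. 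Hence $x_{\hat{k}} = 0$ for all such $\hat{k}$, so $(x_1, \ldots, x_k)$ is feasible for IP$_k(\bbeta)$ with the same objective value, giving $z_n(\bbeta) \leq z_k(\bbeta)$. Combining this with the reverse inequality from \thref{nondecreasingbetweeniterations} yields the reduction $z_n(\bbeta) = z_k(\bbeta)$.

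Finally I would specialize this reduction twice. Setting $\bbeta = \ba_k$ gives the first conclusion $z_n(\ba_k) = z_k(\ba_k) = c_k$. For the second conclusion, for any $i \in \{1,\ldots,m\}$ and $\delta > 0$ with $\ba_k - \delta \mathbf{e}_i \in \cB$, the reduction gives $z_n(\ba_k - \delta \mathbf{e}_i) = z_k(\ba_k - \delta \mathbf{e}_i)$, which is strictly less than $z_k(\ba_k) = z_n(\ba_k)$ because $\ba_k \in \bar{\bB}_k$; therefore $\ba_k \in \bar{\bB}_n$. I do not foresee a real obstacle -- the argument is essentially mechanical -- but the only point needing attention is the translation of the third hypothesis, since without invoking $\ba_j \leq \mathbf{b}$ one could not rule out that $\ba_k - \ba_{\hat{k}}$ leaves $\cB$ through an upper-bound violation rather than through a negative component.
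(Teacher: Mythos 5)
Your proof is correct, and it rests on the same key observation as the paper's: the blocking hypothesis $\ba_{k} - \ba_{\hat{k}} \notin \cB$ (which, as you rightly note, reduces to $\ba_{\hat{k}} \not\leq \ba_{k}$ precisely because $\ba_{k} - \ba_{\hat{k}} \leq \ba_{k} \leq \mathbf{b}$ rules out an upper-bound violation) forces $x_{\hat{k}} = 0$ for every $\hat{k} > k$ in any feasible solution whose right-hand side is at most $\ba_{k}$. The difference is one of packaging. The paper argues the two conclusions separately by contradiction: for $z_{n}(\ba_{k}) = c_{k}$ it examines an optimal solution $\mathbf{x}^{*}$ of IP$_n(\ba_{k})$, shows $x^{*}_{k} = 0$, and extracts a nonzero coordinate violating the blocking condition; for $\ba_{k} \in \bar{\bB}_{n}$ it applies the blocking observation to a hypothetical $\bpi \lneq \ba_{k}$ with $z_{n}(\bpi) = z_{n}(\ba_{k})$ to conclude $z_{k}(\bpi) = c_{k}$, contradicting $\ba_{k} \in \bar{\bB}_{k}$. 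You instead prove the uniform reduction $z_{n}(\bbeta) = z_{k}(\bbeta)$ for every $\bbeta \in \cB$ with $\bbeta \leq \ba_{k}$ and read both conclusions off it. Your route is marginally cleaner: the paper's first argument only produces some $\hat{k} \neq k$ with $x^{*}_{\hat{k}} > 0$, and one must still dispose of the case $\hat{k} < k$ (by noting that a solution supported on $\{1,\dots,k\}$ cannot exceed $z_{k}(\ba_{k}) = c_{k}$) before the blocking hypothesis, which only concerns indices above $k$, can be invoked; your reduction never encounters that case because it only needs to eliminate the later variables and then hands the remaining support directly to IP$_k(\bbeta)$.
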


By \thref{LSMkton}, the columns of $\mathbf{A}$ should be ordered such that $\mathbf{a}_1\lneq  \mathbf{a}_2\lneq \dots \mathbf{a}_n$. Some of these columns cannot be compared with each other, in which case it is sufficient to ensure that for all $k, \hat{k} \in \mathbb{N}$ where $k<\hat{k}\leq n$, $\mathbf{a}_k-\mathbf{a}_{\hat{k}}\notin \mathcal{B}$. By ordering the variables in this manner, it is sufficient to check if $z_k(\mathbf{a}_k)=c_k$ and $\mathbf{a}_k\in \bar{\mathbf{B}}_k$ to evaluate whether or not to add variable $k$ to the set during the $k^{th}$ step.
By \thref{nondecreasingbetweeniterations}, to check if $z_k(\mathbf{a}_k)=c_k$, we first check if $z_{k-1}(\mathbf{a}_k)\leq c_k$, then if $\mathbf{a}_k\in \bar{\mathbf{B}}_k$.\par
\thref{altvalue} demonstrates the possible relationships between $z_{k-1}(\mathbf{a}_k)$ and $z_k(\mathbf{a}_k)$, and the impact of this on the membership of $\mathbf{a}_k$ in $\bar{\mathbf{B}}_k$.
\begin{mypro}\thlabel{altvalue}
Exactly one of the following holds:
\begin{enumerate}[label = (\roman*)]
    \item $z_{k-1}(\mathbf{a}_k) < c_k$. Then $z_{k}(\mathbf{a}_k)= c_k$ and $\mathbf{a}_k\in \bar{\mathbf{B}}_k$.
    \item $z_{k-1}(\mathbf{a}_k) = c_k$. Then $z_{k}(\mathbf{a}_k)= c_k$.
    \item $z_{k-1}(\mathbf{a}_k) > c_k$. Then $z_{k}(\mathbf{a}_k)=z_{k-1}(\mathbf{a}_k)$.
\end{enumerate}
Thus, $z_k(\mathbf{a}_k) = \max\left\{z_{k-1}(\mathbf{a}_k), c_k\right\}$. In addition, if $z_{k-1}(\mathbf{a}_k) \geq c_k$, $\mathbf{a}_k\in \bar{\mathbf{B}}_k$ if and only if $\mathbf{a}_k\in \bar{\mathbf{B}}_{k-1}$.
\end{mypro}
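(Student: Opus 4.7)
The plan is to first derive the closed-form identity $z_k(\mathbf{a}_k) = \max\{z_{k-1}(\mathbf{a}_k), c_k\}$, from which the trichotomy (i)--(iii) falls out immediately, and then to leverage the same structural observation to settle the level-set-minimality equivalence. The three cases are mutually exclusive and exhaustive by the trichotomy of comparison between $z_{k-1}(\mathbf{a}_k)$ and $c_k$, so I only need to verify the value-function and minimality assertions within each case.

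For the formula itself, the lower bound $z_k(\mathbf{a}_k) \geq \max\{z_{k-1}(\mathbf{a}_k), c_k\}$ is immediate from \thref{nondecreasingbetweeniterations} applied at $\bbeta = \mathbf{a}_k$ together with \thref{elementary}(1). The reverse inequality is where the essential work is done: I would fix any $\mathbf{x}^*\in\opt_k(\mathbf{a}_k)$ and exploit nonnegativity structurally. Since $\mathbf{A}\geq\mathbf{0}$, $\mathbf{a}_k\neq\mathbf{0}$, and the constraint $\sum_{j=1}^k \mathbf{a}_j x^*_j \leq \mathbf{a}_k$ holds component-wise, reading off the positive coordinates of $\mathbf{a}_k$ forces $x^*_k \leq 1$. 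If $x^*_k = 1$, then the residual constraint $\sum_{j<k}\mathbf{a}_j x^*_j \leq \mathbf{0}$ combined with $\mathbf{a}_j \neq \mathbf{0}$ (for each $j<k$) forces $x^*_j = 0$ for every $j<k$, so the objective is exactly $c_k$; if $x^*_k = 0$, the objective is at most $z_{k-1}(\mathbf{a}_k)$. Taking the larger of these two alternatives yields the upper bound, and substituting each comparison case produces (i), (ii), (iii) mechanically.

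For the level-set-minimality equivalence under the hypothesis $z_{k-1}(\mathbf{a}_k) \geq c_k$ (covering cases (ii) and (iii)), we have $z_k(\mathbf{a}_k) = z_{k-1}(\mathbf{a}_k)$. For any $i$ with $\mathbf{a}_k - \mathbf{e}_i \in \bar{\mathcal{B}}$, the right-hand side $\mathbf{a}_k - \mathbf{e}_i$ fails to dominate $\mathbf{a}_k$, so by the same coordinatewise argument no feasible solution to IP$_k(\mathbf{a}_k - \mathbf{e}_i)$ can take $x_k \geq 1$; hence $z_k(\mathbf{a}_k - \mathbf{e}_i) = z_{k-1}(\mathbf{a}_k - \mathbf{e}_i)$. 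The strict-decrease condition in the remark following \thref{minimal} therefore reads identically for $\bar{\mathbf{B}}_k$ and $\bar{\mathbf{B}}_{k-1}$, giving the equivalence. For the minimality claim embedded in (i), the same reduction gives $z_k(\mathbf{a}_k - \mathbf{e}_i) = z_{k-1}(\mathbf{a}_k - \mathbf{e}_i) \leq z_{k-1}(\mathbf{a}_k) < c_k = z_k(\mathbf{a}_k)$, where the middle inequality uses monotonicity from \thref{elementary}(2); this directly shows $\mathbf{a}_k \in \bar{\mathbf{B}}_k$.

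The main step requiring care is the structural reduction to $x^*_k \in \{0,1\}$ and the forcing of $x^*_j = 0$ for $j<k$ when $x^*_k = 1$; both rely crucially on the standing assumptions that $\mathbf{A}$ is nonnegative and that no column $\mathbf{a}_j$ is zero. Once that observation is in hand, the remaining arguments are essentially bookkeeping, reading off the value of the maximum in each sub-case of the trichotomy.
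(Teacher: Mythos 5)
Your proof is correct and follows essentially the same route as the paper: both hinge on the observation that nonnegativity of $\mathbf{A}$ and the absence of zero columns force $x_k^* \leq 1$ at right-hand side $\mathbf{a}_k$ (with all other variables vanishing when $x_k^* = 1$) and force $x_k = 0$ at any right-hand side $\lneq \mathbf{a}_k$, reducing everything to $z_{k-1}$. You merely make explicit the structural step the paper's proof leaves implicit, and organize the argument as ``formula first, then cases'' rather than case by case.
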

\begin{continueexample}{exExampleIP}
In (EXIP), with $\mathbf{b} = (3,3)^{\top}$:
\begin{enumerate}[label=(\roman*)]
    \item $z_3(\mathbf{a}_3) = 4$ and $z_2(\mathbf{a}_3) = 2$, so $\mathbf{a}_3 \in \bar{\mathbf{B}}_3$,
    \item $z_5(\mathbf{a}_5) = 3$, $z_4(\mathbf{a}_5) = 3$, and $\mathbf{x}^* = \mathbf{e}_4 \in \opt_4(\mathbf{a}_5)$ has $\sum_{j=1}^{4}\mathbf{a}_jx^*_j<\mathbf{a}_5$, so $\mathbf{a}_5 \notin \bar{\mathbf{B}}_5$, and
    \item $z_6(\mathbf{a}_6) = 6$ and $\opt_5(\mathbf{a}_6) = \{2\mathbf{e}_4\}$, so for all $\mathbf{x} \in \opt_5(\mathbf{a}_6)$, $\sum_{j=1}^{5}\mathbf{a}_jx^*_j=\mathbf{a}_6$, so $\mathbf{a}_6 \in \bar{\mathbf{B}}_6$.\hfill\qedsymbol
\end{enumerate}\end{continueexample}
\thref{genstepup} generalizes \thref{altvalue} to relationships between $z_{k-1}(\bbeta)$ and $z_k(\bbeta)$ for arbitrary fixed $\bbeta$; in particular, \thref{genstepup} suggests a Gilmore-Gomory approach for solving IP$(\bbeta)$ or IP$_k(\bbeta)$. The classic Gilmore-Gomory recursion is (\cite{NemhauserWolsey1988}):
\begin{equation*}\label{GGR}
z(\bbeta) = \max\left\{z(\bbeta-\mathbf{a}_j)+c_j \,\vert\, j \in \{1, ..., n\}, \mathbf{a}_j\leq \bbeta\right\}.
\end{equation*}
\begin{mypro}\thlabel{genstepup}
Given $\bbeta \in \mathcal{B}$, $z_k(\bbeta) =
\max\limits_{\ell}\left\{z_{k-1}(\bbeta - \ell\ba_{k})+\ell c_{k} \,\vert\, \ell\ba_{k} \leq \bbeta, \ell \in \mathbb{Z}_{+}\right\}$.
\end{mypro}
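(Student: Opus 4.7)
The plan is a two-sided inequality argument on the standard recursion, both directions of which rely only on material already available in the paper.

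First I would establish the inequality $z_k(\bbeta) \geq \max_{\ell}\{z_{k-1}(\bbeta - \ell\ba_k) + \ell c_k : \ell\ba_k \leq \bbeta, \ell \in \mathbb{Z}_+\}$. Fix any $\ell \in \mathbb{Z}_+$ with $\ell\ba_k \leq \bbeta$, and let $\tilde{\bx} \in \opt_{k-1}(\bbeta - \ell\ba_k)$ (which exists since the restricted IP is feasible and bounded by the paper's standing assumptions). Then $\hat{\bx} := (\tilde{x}_1, \dots, \tilde{x}_{k-1}, \ell)^\top \in \mathbb{Z}_+^k$ is feasible for IP$_k(\bbeta)$, since $\sum_{j=1}^{k}\ba_j \hat{x}_j = \sum_{j=1}^{k-1}\ba_j\tilde{x}_j + \ell\ba_k \leq (\bbeta - \ell\ba_k) + \ell\ba_k = \bbeta$, so $z_k(\bbeta) \geq \sum_{j=1}^{k} c_j \hat{x}_j = z_{k-1}(\bbeta - \ell\ba_k) + \ell c_k$. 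Taking the maximum over all admissible $\ell$ gives the desired inequality.

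For the reverse inequality, I would use \thref{schaeferoriginal}\ref{orig.1} directly. Let $\bx^* \in \opt_k(\bbeta)$ and set $\ell^* := x_k^*$. Since $\bx^*$ is feasible for IP$_k(\bbeta)$, $\ell^*\ba_k \leq \sum_{j=1}^{k}\ba_j x_j^* \leq \bbeta$, so $\ell^*$ is an admissible choice of $\ell$. By \thref{schaeferoriginal}\ref{orig.1}, $z_k(\bbeta) = z_{k-1}(\bbeta - \ba_k x_k^*) + c_k x_k^* = z_{k-1}(\bbeta - \ell^*\ba_k) + \ell^* c_k$, which is no larger than the maximum on the right-hand side. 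Combining both inequalities yields the claim.

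The hard part will essentially be verifying that the lower-bound direction requires no additional hypotheses beyond feasibility of $\ell\ba_k \leq \bbeta$; this is immediate because the paper's standing assumption of nonnegative $\mathbf{A}$ with no zero columns ensures IP$_{k-1}(\bbeta - \ell\ba_k)$ is feasible and bounded (the zero vector is feasible), so an optimal $\tilde{\bx}$ exists. No other subtleties appear: the recursion is a pure case analysis on the value of $x_k$ in an optimal solution, and \thref{schaeferoriginal}\ref{orig.1} has already done the substantive work of decomposing the objective along the $k$th coordinate.
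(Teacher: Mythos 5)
Your proof is correct and follows essentially the same route as the paper's: both directions rest on decomposing an optimal solution of IP$_k(\bbeta)$ according to its $k$th coordinate and, conversely, appending $\ell$ copies of variable $k$ to an optimal solution of IP$_{k-1}(\bbeta - \ell\ba_{k})$. The only cosmetic difference is that you state the two inequalities directly and cite \thref{schaeferoriginal}\ref{orig.1} for the upper bound, whereas the paper phrases both directions as contradictions and re-derives that decomposition inline.
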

\begin{proof}
Suppose $z_{k}(\bbeta) > \max\limits_{\ell}\left\{z_{k-1}(\bbeta - \ell\ba_{k})+\ell c_{k} \,\vert\, \ell\ba_{k} \leq \bbeta, \ell \in \mathbb{Z}_{+}\right\}$; equivalently, there exists $\mathbf{x}^*$ such that $\sum_{j=1}^k \mathbf{a}_jx_j^* \leq \bbeta$ and $\sum_{j=1}^k c_jx_j^* > \max\limits_{\ell\in \mathbb{Z}_+:\, \ell\mathbf{a}_k \leq \bbeta}z_{k-1}(\bbeta-\ell \mathbf{a}_k) + \ell c_k$. Then $\sum_{j=1}^{k-1}x_j^*c_j > \max\limits_{\ell\in \mathbb{Z}_+ :\, \ell\mathbf{a}_k \leq \bbeta}z_{k-1}(\bbeta-\ell \mathbf{a}_k) +  c_k(\ell-x_k^*) \geq z_{k-1}(\bbeta-x_k^* \mathbf{a}_k)$, a contradiction. On the other hand, suppose $z_{k}(\bbeta) < \max\limits_{\ell}\left\{z_{k-1}(\bbeta - \ell\ba_{k})+\ell c_{k} : \ell\ba_{k} \leq \bbeta,\ \ell \in \mathbb{Z}_{+}\right\}$; that is, there exists $\ell \in \mathbb{Z}_{+}$ such that $\ell\mathbf{a}_k \leq \bbeta$ and $z_{k-1}(\bbeta-\ell\mathbf{a}_k)+\ell c_k > z_k(\bbeta)$. Let $\mathbf{x}^* \in \opt_{k-1}(\bbeta-\ell\mathbf{a}_k)$, and define $\bx' \in \mathbb{Z}^{k}$ as $x'_{j} = x^{*}_{j}$ for $j \in \{1,\dots,k-1\}$ and $x'_{k} = \ell$. 
%$\mathbf{x}'$ be $\mathbf{x}^*$ with $k$th component 0. 
Then $\mathbf{x}'$ is feasible for IP$_k(\bbeta)$; thus, $z_{k}(\bbeta) \geq \sum\limits_{j = 1}^{k}c_{j}x'_{j} = z_{k-1}(\bbeta - \ell\ba_{k}) + \ell c_{k} > z_{k}(\bbeta)$, a contradiction. %with value $z_{k-1}(\bbeta-\ell\mathbf{a}_k)+\ell c_k$, a contradiction. 
We conclude that $z_k(\bbeta) = \max\limits_{\ell\in\mathbb{Z}_+ :\, \ell\mathbf{a}_k \leq \bbeta}z_{k-1}(\bbeta-\ell \mathbf{a}_k) + \ell c_k$.
\end{proof}
The key differences between the two approaches are that \thref{genstepup} focuses on a single variable from the problem at each level of the recursion, while all variables remain present throughout the Gilmore-Gomory approach given in \cite{NemhauserWolsey1988}. In addition, the number of problems generated by \thref{genstepup} depends on the maximum number of $\mathbf{a}_k$ which can be removed from $\bbeta$, while the number of problems generated by the classic recursion is equal to the number of columns less than or equal to $\bbeta$ in a component-wise sense.

\section{Maximal Connected \color{black}Level Sets of the IP Value Function}
A level set of the IP value function may consist of multiple subsets of the hyperrectangle $\cB$ that are not connected (see \thref{Tstep} and Figures \ref{fig:MC-level-A} and \ref{fig:MC-level-B}). Hence, the structure of optimal solutions may vary greatly within the same level set. Having access to connected \color{black}subsets of level sets for the recourse value function in these problems could allow for optimization with respect to a connected set \color{black} of right-hand sides in each subset as a subproblem with fixed second-stage value and known allowable variability with respect to those bounds. We explore connected \color{black}subsets of level sets, and in particular, we define and discuss properties of maximal connected \color{black}subsets of level sets.
Maximal connected \color{black}level lattices (MC-level lattices) are MC-level sets intersected with the lattice $\mathbb{Z}^{m}_+$\color{black}. Note that because we are interested in the connections between sets of right-hand sides, we assume throughout this section that $\cB = \mathbb{R}^m_+$ (or, equivalently, that each component of $\mathbf{b}$ is positive infinity). The removal of this assumption affects only \thref{MC_Adjacent}, which does not necessarily hold for MC-level sets that intersect an upper boundary plane of $\cB$.\color{black}
\begin{defn}\thlabel{ivcurve}
Given $\bbeta_1$, $\bbeta_2 \in \cB$, a continuous function $d:[0,1]\to\cB$ is a \textbf{continuous isovalue curve} from $\bbeta_1$ to $\bbeta_2$ if $d(0) = \bbeta_1$, $d(1) = \bbeta_2$, and $z(d(t)) = z(\bbeta_1)$ for all $t\in [0,1]$.
\end{defn}
Given disjoint connected subsets $M_1, M_2 \subset [0,1]$, we say that $M_1$ precedes $M_2$ in the ordering of $[0,1]$ if for any $t_1 \in M_1$, $t_2 \in M_2$, $t_1 < t_2$.\color{black}
\begin{defn}\thlabel{Tstep} For all ${\bm{\beta}} \in \cB$, define the \textbf{MC-level set} $T({{\bm{\beta}}}) \coloneqq \left\{\bar{\bm{\beta}}\in\cB\,\vert\right.$ there exists a continuous isovalue curve $d$ from ${\bbeta}$ to $\left.\bar{\bbeta}\right\}$. For all ${\bm{\beta} \in }$ \wenxin{$\bar{\mathcal{B}}$}, \color{black} define the \textbf{MC-level lattice} $C({\bm{\beta}}) \coloneqq T({\bm{\beta}})\cap \mathbb{Z}^m_+ $.
\end{defn}
\begin{continueexample}{exExampleIP}
In (EXIP), with $\cB$ unbounded above, $T((1,1)^{\top})$ is the unbounded set $\left\{\bbeta\,\vert\, \bbeta_1 \geq 1, \ 1\leq \bbeta_2 < 2\right\}$, and $C((1,1)^{\top}) =\left\{(v,1)\,\vert\,v \in \mathbb{N}\right\}$\color{black}.\hfill\qedsymbol\end{continueexample}
\begin{myl}\thlabel{easyiso}
Given $\bbeta_1\in \cB$, $\bbeta_2 \in T(\bbeta_1)$, there exists an isovalue curve $d': [0,1]\to\cB$ from $\bbeta_1$ to $\bbeta_2$ such that $\lfloor d'(\zeta)\rfloor = \lfloor d'(\theta)\rfloor$ implies $\lfloor d'(\zeta)\rfloor = \lfloor d'(\eta)\rfloor$ for all $0\leq \zeta<\eta<\theta\leq 1$  - that is, $\lfloor d'\rfloor$ takes on any given value for at most a single connected subset of $[0,1]$.
\end{myl}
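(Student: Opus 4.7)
The plan is to start from any continuous isovalue curve $d:[0,1]\to\cB$ from $\bbeta_1$ to $\bbeta_2$ (which exists by hypothesis since $\bbeta_2\in T(\bbeta_1)$) and modify it by shortcutting ``detours.'' The argument rests on two observations. First, because $z(\bbeta)=z(\lfloor\bbeta\rfloor)$, the value function is constant on each integer cell $C_{\mathbf{v}}\coloneqq\prod_{i=1}^{m}[v_i,v_i+1)$; together with $d$ being isovalue, this forces $z(\mathbf{v})=z(\bbeta_1)$ for every $\mathbf{v}$ in $\mathbf{V}\coloneqq\{\lfloor d(t)\rfloor:t\in[0,1]\}$. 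Second, each $C_{\mathbf{v}}$ is convex, so any line segment between two of its points lies in $C_{\mathbf{v}}$ and is automatically an isovalue curve. Since $d([0,1])$ is compact and bounded in $\mathbb{R}^m_+$, the set $\mathbf{V}$ is finite.

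The construction proceeds by iterated shortcutting. Suppose at the current stage there is a cell $C_{\mathbf{v}}$ whose preimage $I_{\mathbf{v}}\coloneqq\{t:\lfloor d(t)\rfloor=\mathbf{v}\}$ is disconnected; then there exist $\zeta,\theta\in I_{\mathbf{v}}$ and $\eta\in(\zeta,\theta)$ with $\lfloor d(\eta)\rfloor\neq\mathbf{v}$. I would replace $d|_{[\zeta,\theta]}$ by the affine segment $s\mapsto\frac{\theta-s}{\theta-\zeta}d(\zeta)+\frac{s-\zeta}{\theta-\zeta}d(\theta)$. Because $d(\zeta),d(\theta)\in C_{\mathbf{v}}$ and $C_{\mathbf{v}}$ is convex, the segment lies in $C_{\mathbf{v}}$, so the glued curve is a continuous isovalue curve from $\bbeta_1$ to $\bbeta_2$ whose preimage of $C_{\mathbf{v}}$ on $[\zeta,\theta]$ is now the whole interval.

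To make genuine progress, I would choose $\zeta,\theta$ as extremal in $I_{\mathbf{v}}$, essentially close to $\inf I_{\mathbf{v}}$ and $\sup I_{\mathbf{v}}$, so that after the shortcut the preimage of $C_{\mathbf{v}}$ becomes a single interval. Processing the finitely many cells of $\mathbf{V}$ one at a time — for example, in order of decreasing diameter of $I_{\mathbf{v}}$ — yields the desired $d'$, because a shortcut for $C_{\mathbf{v}}$ is performed entirely inside $C_{\mathbf{v}}$: the other cells are simply erased on $[\zeta,\theta]$ and so cannot be given a new disconnection.

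The main obstacle is the bookkeeping around the half-openness of cells. Because $C_{\mathbf{v}}$ is not closed, $\inf I_{\mathbf{v}}$ or $\sup I_{\mathbf{v}}$ may coincide with a point $d(t)$ whose floor differs from $\mathbf{v}$ in some coordinate where $d(t)_i=v_i+1$, so these extrema need not themselves lie in $I_{\mathbf{v}}$. I would handle this by choosing $\zeta,\theta\in I_{\mathbf{v}}$ sufficiently close to the extrema, invoking the finiteness of $\mathbf{V}$ together with a straightforward induction on the number of cells whose preimage is still disconnected to ensure both termination of the procedure and a valid final curve $d'$ with the stated property.
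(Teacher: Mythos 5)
Your proposal is correct and follows essentially the same route as the paper's proof: both exploit that $z$ is constant on each convex open-ceiling unit cell, that only finitely many such cells are visited by a bounded continuous curve, and that the portion of the curve between the first and last visit to a cell can be replaced by a line segment lying inside that cell. The only difference is organizational --- the paper picks a curve minimizing the number of cells with disconnected preimage and derives a contradiction from a single shortcut, whereas you iterate the shortcut and induct on the number of offending cells --- and your handling of the half-openness issue (choosing $\zeta,\theta$ in the first and last connected components of $I_{\mathbf{v}}$ rather than at its infimum and supremum) matches the paper's choice exactly.
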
\color{black}

\begin{defn}
Given $\bbeta_1,\bbeta_2 \in \mathbb{Z}^m_+$, $\bbeta_1$ and $\bbeta_2$ are \textbf{adjacent} if there exists $j \in \{1, ..., m\}$ such that $\bbeta_1-\bbeta_2 = \pm\mathbf{e}_j$.
\end{defn}

\thref{Csteppro} shows that there exists a sequence of adjacent points in an MC-level lattice which connects any two of its members.
\begin{mypro}\thlabel{Csteppro}
Given ${\bm{\beta}}\in \bar{\mathcal{B}}$ and $\bm{\beta}_1$, $\bm{\beta}_2 \in C({\bm{\beta}})$, there exists a finite sequence of points, $V = \left\{v_1 = \bm{\beta}_1, ..., v_r = \bm{\beta}_2\right\}$, such that for all $i\in \{1, ..., r\}$, $v_i\in C({\bm{\beta}})$ and for all $i \in \{1, ..., r-1\}$, $v_i$ and $v_{i+1}$ are adjacent.
\end{mypro}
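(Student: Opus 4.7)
The plan is to extract a finite-length lattice path from the isovalue curve supplied by \thref{easyiso}.

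First, I would apply \thref{easyiso} to obtain a continuous isovalue curve $d':[0,1]\to\mathcal{B}$ from $\bm{\beta}_1$ to $\bm{\beta}_2$ for which $\lfloor d'(\cdot)\rfloor$ attains each value on a single connected subinterval. Since $d'([0,1])$ is compact, $\lfloor d'\rfloor$ assumes only finitely many values, and listing the distinct values in order of first occurrence yields a finite sequence $w_1=\bm{\beta}_1,\,w_2,\dots,\,w_p=\bm{\beta}_2$ of lattice points.

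Next, I would verify that every $w_k$ lies in $C(\bm{\beta})$. Because $z$ is constant on the half-open box $[w_k,w_k+\mathbf{1})$, $z(w_k)=z(\bm{\beta})$; an isovalue curve from $\bm{\beta}$ to $w_k$ is obtained by concatenating an isovalue curve from $\bm{\beta}$ to $\bm{\beta}_1$ (which exists since $\bm{\beta}_1\in T(\bm{\beta})$), the restriction of $d'$ to $[0,t_k]$ for any $t_k$ satisfying $\lfloor d'(t_k)\rfloor=w_k$, and the straight-line segment from $d'(t_k)$ to $w_k$, which remains in the same half-open box.

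The main step is to bridge each consecutive pair $w_k,w_{k+1}$ by an adjacent lattice path. Let $t^*$ be the transition time between their assigned subintervals. Because the sequence records every distinct floor value, $\lfloor d'(t^*)\rfloor\in\{w_k,w_{k+1}\}$, and continuity of $d'$ confines $d'(t^*)$ to $[w_k,w_k+\mathbf{1}]\cap[w_{k+1},w_{k+1}+\mathbf{1}]$. A coordinatewise comparison then forces $w_{k+1}-w_k\in\{-1,0,1\}^m$ with nonzero entries of a common sign, so that $w_k$ and $w_{k+1}$ are componentwise comparable and differ on some coordinate set $J$. I would flip the coordinates of $J$ one at a time to form an adjacent sequence from $w_k$ to $w_{k+1}$; monotonicity of $z$ guarantees that every intermediate lattice point $w$ lies between $w_k$ and $w_{k+1}$ and therefore satisfies $z(w)=z(\bm{\beta})$, while the straight-line segment between two consecutive lattice points in the flip sequence lies inside a shared half-open box, supplying the isovalue curves that place each intermediate point in $T(\bm{\beta})$.

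Concatenating these bridging paths across all $k$ yields the required finite sequence in $C(\bm{\beta})$ joining $\bm{\beta}_1$ to $\bm{\beta}_2$. The hardest part will be the coordinatewise analysis in the bridging step: proving componentwise comparability of $w_k$ and $w_{k+1}$ relies on the maximality of the distinct-value sequence (which prevents a third floor value from occurring at $t^*$) together with the continuity of $d'$, which together squeeze $d'(t^*)$ into a sufficiently constrained boundary set.
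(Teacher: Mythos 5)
Your proposal is correct and follows essentially the same route as the paper's proof: invoke \thref{easyiso}, extract the finite ordered sequence of floor values, show consecutive values are componentwise comparable by analyzing $d'$ at the transition time between their subintervals, and then interpolate one coordinate at a time using monotonicity of $z$. Your packaging of the comparability step (squeezing $d'(t^*)$ into the intersection of the two closed unit boxes and noting its floor must be one of the two values) is a cleaner rendering of the paper's two-case contradiction argument, and your explicit verification that each floor value lies in $C(\bm{\beta})$ fills in a detail the paper leaves implicit.
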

\begin{proof}
By \thref{Tstep} and \thref{easyiso}, there must exist a continuous isovalue curve $d':[0,1]\to\cB$ from $\bbeta_1$ to $\bbeta_2$ for which $\lfloor d'\rfloor$ takes on any given value for at most a single connected subset of $[0,1]$. Let $\Gamma = \left\{\bm{\gamma}^1 = \bbeta_1, ..., \bm{\gamma}^s = \bbeta_2\right\}$ be the ordered sequence of values taken on by $\lfloor d'\rfloor$, so that $\lfloor d(t_i)\rfloor = \bm{\gamma}^i$, $\lfloor d(t_j)\rfloor = \bm{\gamma}^j$, $t_i, t_j \in [0,1]$, and $i < j$ collectively imply $t_i < t_j$. Let $\mathbf{M} = \{M_1, ..., M_s\}$ be the ordered sequence of connected subsets of $[0,1]$ corresponding to the members of $\Gamma$ - that is, for any $i \in \{1,\dots,s\}$, $t \in M_i$ is equivalent to $\lfloor d'(t)\rfloor = \bm{\gamma}^i$. Denote the closure of $M_i$ as $\overline{M}_i$, and define {\tajayi{ the singleton $\tau_i$ such that $\{\tau_{i}\} = \overline{M}_i\cap\overline{M}_{i+1}$; because the members of $\mathbf{M}$ are disjoint and ordered, and their union is $[0,1]$, distinct $\tau_i$ will exist for each $i\in\{1, ..., s-1\}$.}} Note that either $\tau_i \in M_i$ or $\tau_i \in M_{i+1}$, and that since $d'$ is a continuous function over a bounded domain, each component of $d'$ is also bounded, so that $\Gamma$ is finite. Further, since $d'$ is continuous, $\Vert \bm{\gamma}^{i+1}-\bm{\gamma}^i\Vert_{\infty} = 1$ for all $i \in \{1, ..., s-1\}$.\par
Suppose there exists $i \in \{1, ..., s-1\}$ for which for some $j \in \{1, ..., m\}$, $\bm{\gamma}^{i+1}_j > \bm{\gamma}^i_j$ and for some $k \in \{1, ..., m\}$, $\bm{\gamma}^i_k > \bm{\gamma}^{i+1}_k$. There are two possibilities:

\vspace{12pt}
\setlength{\leftskip}{1cm}
Case 1: Suppose that both $M_i$ and $M_{i+1}$ include more than a single point. Since $d'$ is continuous, there must exist $\epsilon > 0$ such that for all $t \in (\tau_i-\epsilon, \tau_i)$, $\lfloor d'(t)\rfloor = \bm{\gamma}^i$ and for all $t \in (\tau_i, \tau_i+\epsilon)$, $\lfloor d'(t)\rfloor = \bm{\gamma}^{i+1}$. For any $t$ such that $\lfloor d'(t)\rfloor = \bm{\gamma}^i$, we must have $(\bm{\gamma}^{i+1}-d'(t))_j > 0$, so by the continuity of $d'$, we must have $d'(\tau_i)_j = \bm{\gamma}^{i+1}_j$. Similarly, for any $t$ such that $\lfloor d'(t)\rfloor = \bm{\gamma}^{i+1}$, we must have $(\bm{\gamma}^i-d'(t))_k > 0$, so that by the continuity of $d'$, we must have $d'(\tau_i)_k = \bm{\gamma}^i_k$. However, this implies that $\tau_i \notin M_i\cup M_{i+1}$, a contradiction. 

Case 2: Suppose on the other hand that $M_i$ is a singleton; then $M_{i} = \{\tau_i\}$. {\tajayi{Observe that, because $M_{i} \cap M_{i+1} = \emptyset$, if $M_{i+1}$ is a single point, $\overline{M_{i+1}} \cap \overline{M_{i}} = \emptyset$. This contradicts the continuity of $d'$; thus, $M_{i+1}$ contains more than a single point. Fix $\epsilon > 0$ such that $(\tau_{i}, \tau_{i} + \epsilon) \subseteq M_{i+1}$. For any $\delta \in (0, \epsilon)$, $d'(\tau_{i} + \delta)_{j} \geq \lfloor d'(\tau_{i} + \delta)_{j}\rfloor = \bm{\gamma}^{i+1}_{j} = \bm{\gamma}^{i}_{j} + 1$. By continuity, $d'(\tau_{i})_{j} = \lim\limits_{t \to \tau_{i}^{+}} d'(t)_{j} \geq \gamma^{i}_{j} + 1 = \lfloor d'(\tau_{i})_{j}\rfloor + 1 > d'(\tau_{i})_{j}$, a contradiction. An analogous argument leads to a similar contradiction when $M_{i+1}$ is a singleton. Thus, we conclude that for all $i \in \{1,\dots,s\},$ either $\gamma^{i} \leq \gamma^{i+1}$ or $\gamma^{i} \geq \gamma^{i+1}$.}}\par
\vspace{12pt}
\setlength{\leftskip}{0cm}
Define $\hat{\bm{\gamma}}^j$ such that $\hat{\bm{\gamma}}^j_k = \mathbbm{1}_{k\leq j}\bm{\gamma}^i_k + \mathbbm{1}_{k> j}\bm{\gamma}^{i+1}_k$ for all $k \in \{1, ..., m\}$, and for all $j \in \{0, ..., m\}$. Then for all $j \in \{0, ..., m\}$, $\bm{\gamma}^i\leq \hat{\bm{\gamma}}^j\leq \bm{\gamma}^{i+1}$ or $\bm{\gamma}^i\geq\hat{\bm{\gamma}}^j\geq\bm{\gamma}^{i+1}$, and for all $j \in \{0, ..., m-1\}$, either $ \hat{\bm{\gamma}}^j-\hat{\bm{\gamma}}^{j+1} =\pm \mathbf{e}_j$ or $\hat{\bm{\gamma}}^j-\hat{\bm{\gamma}}^{j+1} = \mathbf{0}$. By the monotonicity of the value function, $\hat{\bm{\gamma}}^j \in C(\bbeta)$ for all $j \in \{0, ..., m\}$. Inserting $\left\{\hat{\bm{\gamma}}^1, ..., \hat{\bm{\gamma}}^{m-1}\right\}$ in $\Gamma$ between $\bm{\gamma}^i$ and $\bm{\gamma}^{i+1}$ for each $i \in \{1, ..., s-1\}$, then removing sequentially adjacent duplicate values, will yield a sequence $V$ satisfying the conditions of the proposition statement.
\end{proof}\color{black}
\thref{finseq} shows an analogous result for any pair of points in an MC-level set.
\begin{cor}\thlabel{finseq}
Given ${\bm{\beta}}\in\cB$, $\bm{\beta}_a$ and $\bm{\beta}_b\in T({\bm{\beta}})$, there exists a finite sequence of points,\\$\left\{\bm{\gamma}^0 = \bm{\beta}_a, ..., \bm{\gamma}^r = \bm{\beta}_b\right\}$, such that $\bm{\gamma}^i\in T({\bm{\beta}})$ and $\vert \bm{\gamma}^{i+1}-\bm{\gamma}^i\vert = \lambda_i\mathbf{e}_j$ for all $i$, where $\lambda_i \in (0,1]$ for all $i$ and $\mathbf{e}_j$ is a unit vector for $j \in \{1, ..., m\}$.
\end{cor}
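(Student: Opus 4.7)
The plan is to reduce this continuous statement to the lattice version \thref{Csteppro} by connecting $\bbeta_a$ and $\bbeta_b$ to their integer floors via short axis-aligned moves. The key structural fact I would rely on is that $z(\cdot) = z(\lfloor \cdot \rfloor)$, and hence $z$ is constant on each half-open cube $\prod_i [n_i, n_i + 1)$; in particular, any axis-aligned segment that stays within such a cube is a continuous isovalue curve. Since the relation ``lies in the same MC-level set as'' is an equivalence relation -- reflexivity, symmetry, and transitivity all follow from concatenating and reversing isovalue curves in \thref{ivcurve} -- this reduction suffices.

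Applying the structural observation to the coordinate-by-coordinate line segment from $\bbeta_a$ to $\lfloor \bbeta_a \rfloor$, and likewise for $\bbeta_b$, yields $\lfloor \bbeta_a \rfloor, \lfloor \bbeta_b \rfloor \in T(\bbeta) \cap \mathbb{Z}^m_+ = C(\lfloor \bbeta \rfloor)$, using also that $T(\lfloor \bbeta \rfloor) = T(\bbeta)$ by the same cube argument. I would then invoke \thref{Csteppro} on $C(\lfloor \bbeta \rfloor)$ to obtain a finite lattice path $v_1 = \lfloor \bbeta_a \rfloor, v_2, \dots, v_r = \lfloor \bbeta_b \rfloor$ inside $T(\bbeta)$, with consecutive members differing by a unit vector in a single coordinate, giving axis-aligned steps of length exactly $1$.

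To finish the construction, I would splice in the transitions from $\bbeta_a$ to $\lfloor \bbeta_a \rfloor$ at the front and from $\lfloor \bbeta_b \rfloor$ to $\bbeta_b$ at the back. For the front, at most $m$ checkpoints are obtained from $\bbeta_a$ by sequentially replacing each coordinate with its floor; the $k$-th such step modifies only coordinate $k$, by an amount in $[0,1)$, and every intermediate checkpoint shares the floor $\lfloor \bbeta_a \rfloor$, hence lies in $T(\bbeta)$. The analogous construction handles the back end. Concatenating the three pieces and relabeling produces the required sequence $\bbeta_a = \gamma^{0}, \gamma^{1}, \dots, \gamma^{r} = \bbeta_b$.

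The only delicate point -- and the main obstacle -- is the bookkeeping required to ensure every $\lambda_i$ is strictly positive rather than just nonnegative: any coordinate of $\bbeta_a$ or $\bbeta_b$ that is already integer-valued produces a zero-length step, which must be suppressed before concatenation. Once those degenerate steps are dropped, the lattice portion supplied by \thref{Csteppro} contributes $\lambda_i = 1$ and the two end portions contribute $\lambda_i \in (0,1)$, so the final sequence satisfies $\lambda_i \in (0,1]$ throughout and meets the stated requirements.
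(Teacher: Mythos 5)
Your proposal is correct and follows essentially the same route as the paper: round $\bm{\beta}_a$ and $\bm{\beta}_b$ down to their floors one coordinate at a time (staying in the same open-ceiling cube, hence in $T(\bm{\beta})$), invoke \thref{Csteppro} to connect the two lattice points, and concatenate. Your extra care about suppressing zero-length steps and about passing to $C(\lfloor\bm{\beta}\rfloor)$ only makes explicit what the paper handles implicitly by counting only the fractional components.
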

Note that other than the steps necessary to move from $\bbeta_a$ and $\bbeta_b$ to the nearest isovalue lattice points, all the intermediate points in the above sequence may be made to be lattice points, as can be seen in Figure \ref{fig:MC-level-C}.
\begin{figure}[H]
    \centering
    \includegraphics[width=0.5\textwidth]{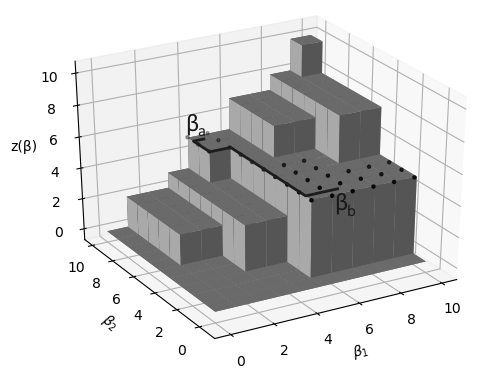} 
    \vspace{-12pt}
    \caption{An isovalue path from $\bbeta_a$ to $\bbeta_b$}
    \label{fig:MC-level-C}
\end{figure}\par
\begin{defn}\thlabel{adjhc}
An $m$-dimensional unit hypercube $H$ is \textbf{anchored} if all of its vertices are in $\mathbb{Z}^m$. In particular, $H(\bbeta)$, $\bbeta \in \mathbb{Z}^m$, is the anchored unit hypercube with $\bbeta$ as its vertex with all components minimal. 
\end{defn}
\begin{defn}
The \textbf{open-ceiling} unit hypercube anchored at $\bbeta \in \bar{\cB}$ is defined as $G(\bbeta) \coloneqq$ $\{\bar{\bbeta} \in$ $\cB\, \vert\,\lfloor \bar{\bbeta}\rfloor = \bbeta\}$.
\end{defn}
\begin{remark}
Given an open-ceiling unit hypercube $G(\bbeta)$, the closure of $G(\bbeta)$ is $H(\bbeta)$.
\end{remark}
\begin{defn}
Two anchored $m-dimensional$ unit hypercubes $H_1$ and $H_2$ are \textbf{adjacent} if $H_1\cap H_2$ is an anchored unit hypercube of dimension $m-1$.
\end{defn}
\thref{MC_Adjacent} shows that the closure of any MC-level set can be written as the union of a set of $m$-dimensional unit hypercubes anchored at integer points.
\begin{mypro}\thlabel{MC_Adjacent}
Given $\bbeta \in \cB\subset \mathbb{R}^m$, there exists a unique \color{black}set $P$ of anchored \color{black}unit hypercubes of dimension $m$ with integer-valued vertex coordinates such that $\bigcup\limits_{H \in P}H = \overline{T(\bbeta)}$, the closure of $T(\bbeta)$. Further, for all pairs $H_1$, $H_2\,\in P$, there exists a finite sequence $U = \{U_1, ..., U_t\}$ of hypercubes in $P$ such that $U_1 = H_1$, $U_t = H_2$, and $U_i$ and $U_{i+1}$ are adjacent for all $i \in \{1, ..., t-1\}$.\color{black}
\end{mypro}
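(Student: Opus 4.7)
The plan is to define $P$ explicitly as the set of anchored unit hypercubes $H(\bm{\gamma})$ for which $G(\bm{\gamma}) \subseteq T(\bbeta)$, and then verify separately that (i) the union of cubes in $P$ equals $\overline{T(\bbeta)}$, (ii) $P$ is the only such cover, and (iii) any two members of $P$ are connected by an adjacency chain in $P$. The structural fact I will exploit throughout is that $z(\bar{\bbeta}) = z(\lfloor \bar{\bbeta}\rfloor)$ forces $z$ to be constant on every open-ceiling unit hypercube $G(\bm{\gamma})$, and the family $\{G(\bm{\gamma}) : \bm{\gamma} \in \mathbb{Z}^m_+\}$ partitions $\cB = \mathbb{R}^m_+$. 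Since $G(\bm{\gamma})$ is convex and hence path-connected, any straight-line segment inside it is a continuous isovalue curve, so the dichotomy ``$G(\bm{\gamma}) \subseteq T(\bbeta)$ or $G(\bm{\gamma}) \cap T(\bbeta) = \emptyset$'' holds.

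For the cover property, every $\bar{\bbeta} \in T(\bbeta)$ lies in $G(\lfloor\bar{\bbeta}\rfloor)$, which by the dichotomy is contained in $T(\bbeta)$, so $T(\bbeta) = \bigcup\{G(\bm{\gamma}) : G(\bm{\gamma}) \subseteq T(\bbeta)\}$. Since every bounded subset of $\mathbb{R}^m_+$ meets only finitely many anchored unit hypercubes, the collection $\{H : H \in P\}$ is locally finite and thus the union $\bigcup_{H\in P}H$ is closed; hence it also contains $\overline{T(\bbeta)}$. The reverse inclusion is immediate because each $H(\bm{\gamma}) = \overline{G(\bm{\gamma})} \subseteq \overline{T(\bbeta)}$ for $H(\bm{\gamma}) \in P$.

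For uniqueness, let $P'$ be any other such cover. Since interiors of distinct anchored unit hypercubes are disjoint, any interior point of $H(\bm{\gamma}) \in P$ (which lies in $T(\bbeta) \subseteq \overline{T(\bbeta)}$) must be covered in $P'$ by $H(\bm{\gamma})$ itself, giving $P \subseteq P'$. Conversely, if $H(\bm{\gamma}) \in P'$, then its interior lies in $\overline{T(\bbeta)}$; a small-ball argument using the constancy of $z$ on $G(\bm{\gamma})$ forces these interior points to lie in $T(\bbeta)$ (otherwise a nearby sequence of $T(\bbeta)$-points would contradict $z$ being constantly equal to $z(\bm{\gamma})$ on $G(\bm{\gamma})$), whence $G(\bm{\gamma}) \subseteq T(\bbeta)$ and $H(\bm{\gamma}) \in P$.

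For the adjacency claim, given $H_1 = H(\bm{\gamma}_1)$ and $H_2 = H(\bm{\gamma}_2)$ in $P$, both anchors lie in $T(\bbeta) \cap \mathbb{Z}^m_+ = C(\lfloor\bbeta\rfloor)$, using that $T$ is constant along its equivalence classes and $\lfloor\bbeta\rfloor \in T(\bbeta)$. Applying \thref{Csteppro} to $\bm{\gamma}_1, \bm{\gamma}_2$ yields a finite sequence $v_1 = \bm{\gamma}_1, \dots, v_r = \bm{\gamma}_2$ in $C(\lfloor\bbeta\rfloor)$ whose consecutive members differ by a single unit vector. Each $H(v_i)$ belongs to $P$, since $v_i \in T(\bbeta)$ forces $G(v_i) \subseteq T(\bbeta)$ by the dichotomy, and lattice adjacency $v_{i+1} - v_i = \pm\mathbf{e}_j$ translates directly into $H(v_i) \cap H(v_{i+1})$ being an anchored unit hypercube of dimension $m-1$. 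The main obstacle I anticipate is the uniqueness argument, since boundary points of $T(\bbeta)$ can lie in the closure of neighboring MC-level sets; the key maneuver is to work with hypercube interiors, which are pairwise disjoint across distinct anchored hypercubes and therefore uniquely determine which cubes appear in any valid cover.
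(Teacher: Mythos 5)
Your proof is correct and takes essentially the same route as the paper's: decompose $T(\bbeta)$ into the open-ceiling unit hypercubes it contains, take closures to obtain $P$, and invoke \thref{Csteppro} on the anchors to produce the adjacency chain. You are in fact somewhat more explicit than the paper about the local-finiteness point (so that the union of the closed cubes is closed and hence equals $\overline{T(\bbeta)}$) and about why no other cover by anchored unit hypercubes is possible, both of which the paper asserts only briefly.
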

\begin{proof}
Fix $\bbeta' \in T(\bbeta)$. Then the open-ceiling unit hypercube $G(\lfloor \bbeta'\rfloor)$ is a subset of $T(\bbeta)$, and $\bbeta' \in G(\lfloor \bbeta'\rfloor)$. As such, there exists a unique countable set $Q$ of $m$-dimensional open-ceiling anchored unit hypercubes, $Q\coloneqq \{Q_{r}\}_{r \in R}$, where $R$ is the index set for $Q$, such that $T(\bbeta) = \bigcup\limits_{r\in R}Q_r$ - that is, for all $\bar{\bbeta}\in T(\bbeta)$, there exists $r \in R$ such that $Q_r = G(\lfloor \bar{\bbeta}\rfloor)$. Let $P = \{\overline{Q_{r}}\}_{r \in R}$, and observe that $\overline{T(\bbeta)} = \bigcup\limits_{r \in R} \overline{Q_{r}}$ - that is, $P$ is a unique set of $m$-dimensional anchored unit hypercubes whose union is the closure of $T(\bbeta)$.\par
Next, fix $H_1$, $H_2$ in $P$. Let $\bbeta_1$ and $\bbeta_2$ anchor $H_1$ and $H_2$, respectively. Let $V = \{v_1 = \bbeta_1, ..., v_s = \bbeta_2\}$ be the sequence of adjacent points whose existence is guaranteed by \thref{Csteppro} given $\bbeta_1$ and $\bbeta_2$ as input points to connect, and define a finite sequence $U = \{H(v_1), ..., H(v_s)\}$, where $H(v_1) = H_1$ and $H(v_s) = H_2$. Since each of these points is in $T(\bbeta)$, the $m$-dimensional hypercube anchored at each point is a subset of $\overline{T(\bbeta)}$ and thus is an element of $P$. Further, since $v_i-v_{i+1}$ is a unit vector for all $i$, $H(v_i)\cap H(v_{i+1})$ is a unit anchored hypercube of dimension $m-1$, so $H(v_i)$ and $H(v_{i+1})$ are adjacent for all $i \in \{1, ..., s-1\}$.
\end{proof}
\color{black}\thref{MC_Adjacent} shows that the closure of an MC-level set can be constructed using anchored \color{black}unit hypercubes of dimension $m$\color{black}. In addition, given any two anchored unit hypercubes in that closure, we can find a finite sequence of adjacent anchored unit hypercubes connecting the two. In Figure \ref{fig:MC-level-A}, there is a sequence of adjacent dark squares between any two dark squares, since $T(\bbeta_1) = T(\bbeta_2)$. On the other hand, in Figure \ref{fig:MC-level-B}, although the closures of $T(\bbeta_4)$ and $T(\bbeta_5)$ intersect, they do not actually share a point - and therefore there is no sequence of isovalue adjacent squares from $T(\bbeta_4)$ to $T(\bbeta_5)$.
\begin{figure}[H]{
    \begin{minipage}[b]{0.43\textwidth}
    \centering
    \includegraphics[width=1\textwidth]{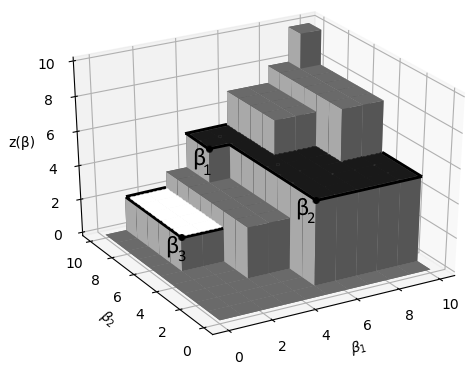} 
    \caption{$T({\bm{\beta}}_1)= T({\bm{\beta}}_2)$}
    \label{fig:MC-level-A}
\end{minipage}
\hspace{0.5cm}
    \begin{minipage}[b]{0.43\textwidth}
    \centering
    \includegraphics[width=1\textwidth]{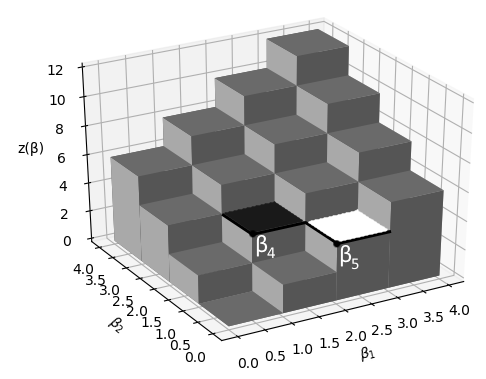}
    \caption{$T({\bm{\beta}}_4)\cap T({\bm{\beta}}_5)= \emptyset$}
    \label{fig:MC-level-B}
    \end{minipage}
}\end{figure}
\par
We next demonstrate some membership properties of MC-level sets based on level-set minimal vectors.
\begin{cor}\thlabel{segment}
Given $\bar{\bm{\beta}}\in \cB$, for all $\hat{\bbeta}\in \cB$ such that $z(\bar{\bm{\beta}}) = z(\hat{\bbeta})$ and $\hat{\bbeta}\geq\bar{\bm{\beta}}$, we have $\hat{\bbeta}\in T(\bar{\bm{\beta}})$.
\end{cor}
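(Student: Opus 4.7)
The plan is to produce an explicit continuous isovalue curve from $\bar{\bbeta}$ to $\hat{\bbeta}$ and invoke \thref{Tstep} directly. The natural candidate is the straight-line segment between the two points, which is precisely where the corollary's name comes from.

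First, I would define $d:[0,1]\to\cB$ by $d(t) \coloneqq \bar{\bbeta} + t(\hat{\bbeta} - \bar{\bbeta})$. This is clearly continuous with $d(0) = \bar{\bbeta}$ and $d(1) = \hat{\bbeta}$. Because $\hat{\bbeta} \geq \bar{\bbeta}$, each component of $\hat{\bbeta} - \bar{\bbeta}$ is nonnegative, so for every $t \in [0,1]$ we have $\bar{\bbeta} \leq d(t) \leq \hat{\bbeta}$, and in particular $d(t) \in \cB$ (since $\cB = \mathbb{R}^m_+$ under the standing assumption of the section, or more generally since $\cB$ is a hyperrectangle containing both endpoints and hence their convex hull).

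Next, I would apply the monotonicity property of $z$ from \thref{elementary}(2): from $\bar{\bbeta} \leq d(t) \leq \hat{\bbeta}$ we get $z(\bar{\bbeta}) \leq z(d(t)) \leq z(\hat{\bbeta})$. Since the hypothesis gives $z(\bar{\bbeta}) = z(\hat{\bbeta})$, both inequalities are equalities, so $z(d(t)) = z(\bar{\bbeta})$ for every $t \in [0,1]$. Therefore $d$ is a continuous isovalue curve from $\bar{\bbeta}$ to $\hat{\bbeta}$ in the sense of \thref{ivcurve}, and by \thref{Tstep} this yields $\hat{\bbeta} \in T(\bar{\bbeta})$.

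No obstacle is expected here; the entire argument is a one-line application of monotonicity to the straight-line segment. The only point worth flagging is verifying that $d(t) \in \cB$ throughout the segment, which is immediate from $\cB$ being a hyperrectangle (and trivial under the unbounded assumption of this section).
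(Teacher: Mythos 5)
Your proposal is correct and matches the paper's own proof essentially verbatim: both construct the straight-line segment between $\bar{\bbeta}$ and $\hat{\bbeta}$ and use monotonicity of $z$ to verify it is a continuous isovalue curve. The only (immaterial) difference is the direction of the parametrization.
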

\begin{proof}
By the monotonicity of $z$, $z(\hat{\bbeta}) = z(\bar{\bbeta}) \leq z(\bbeta') \leq z(\hat{\bbeta})$, for all $\bbeta' \in \left\{\bbeta \ \vert \ \bbeta = \lambda\bar{\bbeta} + (1 - \lambda)\hat{\bbeta}, \lambda \in (0,1)\right\}$ because $\bbeta' \in (\bar{\bbeta}, \hat{\bbeta})$. As such, $d(t) \coloneqq t\bar{\bbeta}+(1-t)\hat{\bbeta}$ is a continuous isovalue curve from $\bar{\bbeta}$ to $\hat{\bbeta}$, so $\hat{\bbeta}\in T(\bar{\bbeta})$.
\end{proof}
\begin{mypro}\thlabel{LSMcor}
For any $\bar{\bm{\beta}}\in\cB$, $\bm{\beta}\in T(\bar{\bm{\beta}})$ if and only if $z(\bm{\beta}) = z(\bar{\bm{\beta}})$ and there exists $\hat{\bm{\beta}} \in $ \wenxin{$\bar{\mathbf{B}}$} \color{black} $\cap T(\bar{\bm{\beta}})$ such that $\hat{\bm{\beta}}\leq \bm{\beta}$.
\end{mypro}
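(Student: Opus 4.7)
The plan is to prove the two directions separately, using \thref{segment} as the workhorse in each case together with the fact that being connected by a continuous isovalue curve induces an equivalence relation on $\cB$ (two such curves can be concatenated), so that MC-level sets partition $\cB$ and $\bbeta' \in T(\bbeta)$ implies $T(\bbeta) = T(\bbeta')$.

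For the easier ($\Leftarrow$) direction, I would observe that $\hat{\bbeta} \in T(\bar{\bbeta})$ implies $z(\hat{\bbeta}) = z(\bar{\bbeta}) = z(\bbeta)$, and then apply \thref{segment} with $\hat{\bbeta}$ as the anchor and $\bbeta \geq \hat{\bbeta}$ as the upper endpoint to conclude $\bbeta \in T(\hat{\bbeta}) = T(\bar{\bbeta})$.

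For the ($\Rightarrow$) direction, I would construct $\hat{\bbeta}$ by a monotone integer descent. Given $\bbeta \in T(\bar{\bbeta})$, we have $z(\bbeta) = z(\bar{\bbeta})$; first I would pass from $\bbeta$ to $\lfloor\bbeta\rfloor$ using $z(\bbeta) = z(\lfloor\bbeta\rfloor)$ together with \thref{segment} to conclude $\bbeta \in T(\lfloor\bbeta\rfloor)$, and hence $\lfloor\bbeta\rfloor \in T(\bar{\bbeta})$. Then I would iterate the following step, starting from $\bbeta^{(0)} \coloneqq \lfloor\bbeta\rfloor$: while the current integer iterate $\bbeta^{(t)}$ is not level-set-minimal, the integer form of \thref{minimal} noted in the remark following the definition supplies an index $i$ with $z(\bbeta^{(t)} - \mathbf{e}_i) = z(\bbeta^{(t)})$; applying \thref{segment} with anchor $\bbeta^{(t)} - \mathbf{e}_i$ yields $\bbeta^{(t)} \in T(\bbeta^{(t)} - \mathbf{e}_i)$, so that $\bbeta^{(t+1)} \coloneqq \bbeta^{(t)} - \mathbf{e}_i$ still lies in $T(\bar{\bbeta})$. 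This descent must terminate because $\lVert \bbeta^{(t)}\rVert_1$ strictly decreases at each step while remaining a nonnegative integer, producing a terminal $\hat{\bbeta} \in \bar{\mathbf{B}} \cap T(\bar{\bbeta})$ with $\hat{\bbeta} \leq \lfloor\bbeta\rfloor \leq \bbeta$.

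The main obstacle, such as it is, is bookkeeping: ensuring that each downward step preserves membership in $T(\bar{\bbeta})$ rather than merely in the level set $S(z(\bar{\bbeta}))$, since in general a level set can split into several MC-level sets. This is precisely what \thref{segment} guarantees at each unit step, so the entire argument reduces to repeated application of that corollary together with the transitivity of the equivalence relation that defines $T$.
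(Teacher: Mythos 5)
Your proof is correct and follows essentially the same route as the paper: both directions rest on \thref{segment} plus concatenation of isovalue curves, with the forward direction reducing to exhibiting a level-set-minimal $\hat{\bbeta}\leq\bbeta$ inside $T(\bbeta)$. The only difference is that your unit-step integer descent explicitly proves the existence of such a $\hat{\bbeta}$, which the paper's proof merely asserts ("there must exist at least one $\tilde{\bm{\beta}}\leq\bm{\beta}$ which is level-set-minimal and for which $\tilde{\bbeta}\in T(\bbeta)$"), so your write-up is if anything slightly more complete.
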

\begin{cor}\thlabel{LSMseq}
Let \wenxin{$\bar{\bbeta} \in \bar{\mathbf{B}}$} \color{black} such that there exists $\hat{\bbeta} \in \bar{\mathbf{B}}\cap T(\bar{\bm{\beta}})$, $\hat{\bbeta}\neq \bar{\bbeta}$. Then there exists a sequence $V = \{v_1, ..., v_s\}$ of distinct vectors in $\bar{\mathbf{B}}\cap T(\bar{\bm{\beta}})$ such that $\bar{\bbeta} = v_1$, $\hat{\bbeta} = v_s$, and for all $i \in \{1, ..., s-1\}$, there exists $\bbeta_i \in T(\bar{\bbeta})$ such that $v_i\lneq \bbeta_i$ and $v_{i+1}\lneq \bbeta_i$.
\end{cor}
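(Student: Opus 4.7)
The plan is to apply Proposition Csteppro to the integer vectors $\bar{\bbeta}$ and $\hat{\bbeta}$, which lie in $C(\bar{\bbeta})$, to obtain a finite adjacent-lattice sequence $u_1 = \bar{\bbeta}, u_2, \dots, u_r = \hat{\bbeta}$ in $C(\bar{\bbeta})$. I would then ``project'' each $u_i$ to an LSM vector $\phi(u_i) \in \bar{\mathbf{B}}\cap T(\bar{\bbeta})$ by successively subtracting unit vectors from $u_i$ while preserving the value $\alpha := z(\bar{\bbeta})$ until no further such reduction is possible. Corollary \thref{segment}, applied to $\phi(u_i)\leq u_i$ with $z(\phi(u_i)) = z(u_i) = \alpha$, gives $u_i \in T(\phi(u_i))$, so $T(\phi(u_i)) = T(\bar{\bbeta})$ and hence $\phi(u_i)\in \bar{\mathbf{B}}\cap T(\bar{\bbeta})$. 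Note that $\phi(\bar{\bbeta}) = \bar{\bbeta}$ and $\phi(\hat{\bbeta}) = \hat{\bbeta}$ since both endpoints are already level-set-minimal.

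The main technical step is verifying that for each adjacent pair $u_i, u_{i+1}$, the componentwise maximum $\bbeta_i := \max(u_i, u_{i+1})$ serves as a common strict upper bound in $T(\bar{\bbeta})$ for $\phi(u_i)$ and $\phi(u_{i+1})$. Because adjacent integer points differ by a single unit vector, $\bbeta_i$ coincides with one of $u_i$ or $u_{i+1}$, and therefore lies in $T(\bar{\bbeta})$. The crux of the argument is the observation that if $u_{i+1} = u_i + \mathbf{e}_j$ (an ``up'' step), then $u_{i+1}$ cannot be LSM, since $z(u_{i+1} - \mathbf{e}_j) = z(u_i) = \alpha = z(u_{i+1})$; symmetrically, a ``down'' step forces $u_i$ to not be LSM. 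In the up case, $\phi(u_i)\leq u_i \lneq u_{i+1} = \bbeta_i$, and since $u_{i+1}$ is not LSM we also have $\phi(u_{i+1})\lneq u_{i+1} = \bbeta_i$; the down case is symmetric. This yields the required strict inequalities.

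Finally, the sequence $\phi(u_1),\dots,\phi(u_r)$, with consecutive duplicates collapsed, supplies a walk in $\bar{\mathbf{B}}\cap T(\bar{\bbeta})$ from $\bar{\bbeta}$ to $\hat{\bbeta}$ along which each consecutive pair satisfies the common-strict-upper-bound property. A standard graph-theoretic shortcutting removes any remaining non-consecutive repeated vectors (the $\bbeta_i$ witnessing strict upper bounds remains valid across shortcuts because repeated vectors are literally equal), producing a simple sequence $V = \{v_1,\dots,v_s\}$ of distinct LSM vectors with the desired property. The hard part is pinning down the strictness $v_i \lneq \bbeta_i$: the naive choice $\bbeta_i = \max(u_i, u_{i+1})$ gives only $\leq$ in general, and it is precisely the incompatibility between adjacency in $C(\bar{\bbeta})$ and level-set-minimality of the larger endpoint that rules out the degenerate case where strictness fails.
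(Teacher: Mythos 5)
Your proof is correct and follows essentially the same route as the paper's: both invoke \thref{Csteppro} to obtain an adjacent lattice chain in $C(\bar{\bm{\beta}})$, project chain points down to level-set-minimal vectors in the same MC-level set (you via greedy unit-vector reduction plus \thref{segment}, the paper via \thref{LSMcor}), and use the chain points themselves as the common strict upper bounds $\bbeta_i$. The only difference is bookkeeping --- the paper greedily selects just the ``last dominating'' chain point at each stage so no deduplication is needed, whereas you project every point and then collapse repeats --- and your explicit observation that the larger endpoint of an adjacent pair in $C(\bar{\bm{\beta}})$ cannot be level-set-minimal correctly secures the required strictness.
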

\color{black}
\thref{LTstepdownt} discusses the relationship between a sequence of right-hand sides when ``stepping down" towards MC-level sets with smaller function value.
\begin{remark}\thlabel{optstat}
For any ${\bbeta} \in \cB$, let $\mathbf{x}^*\in \opt({\bbeta})$. Then for all $\bar{\bbeta} \in T({\bbeta})$ such that $\bar{\bbeta}\geq{\bbeta}$, $\mathbf{x}^*\in\opt(\bar{\bbeta})$.
\end{remark}
\begin{mypro}\thlabel{LTstepdownt}
For any $\bar{\bm{\beta}}\in$ \wenxin{$\bar{\mathbf{B}}$} \color{black} such that $z(\bar{\bm{\beta}}) > 0$, given ${\mathbf{x}}^* \in \opt(\bar{\bm{\beta}})$ and $\bbeta \in T(\bar{\bbeta})$ such that $\bar{\bbeta}\leq \bbeta$, then for all $j$ such that ${x}^*_j > 0$ and all $t \leq {x}^*_j$, $\bm{\beta}-t\mathbf{a}_j\in T(\bar{\bm{\beta}}-t\mathbf{a}_j)$.
\end{mypro}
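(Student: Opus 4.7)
The plan is to show that $\bar{\bbeta}-t\ba_j$ and $\bbeta-t\ba_j$ share the same value and are componentwise comparable, so that \thref{segment} delivers the conclusion. The proof reduces to two ingredients: IP complementary slackness applied at both endpoints, and the straight-line argument behind \thref{segment}.

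First I would invoke \thref{optstat}: since $\bbeta \in T(\bar{\bbeta})$ and $\bar{\bbeta} \leq \bbeta$, the optimal $\mathbf{x}^{*} \in \opt(\bar{\bbeta})$ is also in $\opt(\bbeta)$. Then I apply \thref{IPCS} twice, using the subvector $t\mathbf{e}_{j} \leq \mathbf{x}^{*}$ (valid since $t \leq x^{*}_{j}$ and $t \in \mathbb{Z}_{+}$). This yields $z(t\ba_{j}) = tc_{j}$ together with
\[
z(\bar{\bbeta} - t\ba_{j}) = z(\bar{\bbeta}) - tc_{j}, \qquad z(\bbeta - t\ba_{j}) = z(\bbeta) - tc_{j}.
\]
Because $\bbeta \in T(\bar{\bbeta})$ forces $z(\bbeta) = z(\bar{\bbeta})$, the two right-hand sides above are equal, so $z(\bar{\bbeta} - t\ba_{j}) = z(\bbeta - t\ba_{j})$.

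Next I would check that both arguments lie in $\cB$. Since $\bar{\bbeta} \in \bar{\mathbf{B}}$ and $\mathbf{x}^{*} \in \opt(\bar{\bbeta})$, \thref{equal} gives $\bar{\bbeta} = \sum_{j'} \ba_{j'} x^{*}_{j'}$, so $\bar{\bbeta} - t\ba_{j} = \mathbf{A}(\mathbf{x}^{*} - t\mathbf{e}_{j}) \geq \bzero$ (as $x^{*}_{j} \geq t$ and $\mathbf{A},\mathbf{x}^{*}$ are nonnegative). From $\bar{\bbeta}\leq\bbeta$ we also have $\bbeta - t\ba_{j} \geq \bar{\bbeta} - t\ba_{j} \geq \bzero$, so both points lie in $\cB$. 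Applying \thref{segment} with $\bar{\bbeta} - t\ba_{j}$ as the lower vector and $\bbeta - t\ba_{j}$ as the upper vector, we conclude $\bbeta - t\ba_{j} \in T(\bar{\bbeta} - t\ba_{j})$.

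I don't anticipate a serious obstacle here: the only subtle point is confirming that $\mathbf{x}^{*}$ remains optimal at $\bbeta$ (handled by \thref{optstat}) so that IPCS can be used symmetrically at both endpoints. Once both values collapse to $z(\bar{\bbeta}) - tc_{j}$, the monotonicity-based isovalue segment of \thref{segment} is exactly what is needed.
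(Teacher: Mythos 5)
Your proof is correct, but it is organized quite differently from the paper's. The paper argues by contradiction: it supposes $\bbeta - t\ba_j \notin T(\bar{\bbeta}-t\ba_j)$, extracts a level-set-minimal $\bpi$ with $\bar{\bbeta}-t\ba_j \lneq \bpi \leq \bbeta - t\ba_j$ and $z(\bpi) > z(\bar{\bbeta}-t\ba_j)$, and then uses superadditivity to bound $z(\bbeta) \geq z(\bpi)+tc_j$, contradicting the identity $z(\bar{\bbeta}-t\ba_j) = z(\bar{\bbeta})-tc_j$ obtained from \thref{equal} and \thref{down}. You instead give a direct argument: \thref{optstat} transfers optimality of $\mathbf{x}^*$ from $\bar{\bbeta}$ to $\bbeta$, IP complementary slackness (\thref{IPCS}) applied with the subvector $t\mathbf{e}_j$ at both endpoints yields $z(\bar{\bbeta}-t\ba_j) = z(\bar{\bbeta})-tc_j$ and $z(\bbeta-t\ba_j) = z(\bbeta)-tc_j$, and then \thref{segment} closes the argument since the two shifted points are ordered and share a value. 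Both proofs pivot on the same key identity (the value drops by exactly $tc_j$ when $t\ba_j$ is subtracted), but your version is shorter, avoids the contradiction and the superadditive step entirely, and makes explicit exactly where the hypotheses $\bar{\bbeta} \in \bar{\mathbf{B}}$ and $\bar{\bbeta} \leq \bbeta \in T(\bar{\bbeta})$ are used; what it gives up is the explicit view of what a failure would look like (a strictly dominating level-set-minimal $\bpi$), which the paper's argument surfaces. Your care in checking that both shifted points remain in $\cB$ before invoking \thref{segment} is a detail the paper glosses over.
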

\begin{proof}
Note that if $\bbeta = \bar{\bbeta}$ the result holds trivially. Suppose there exists $\hat{\bm{\beta}}\in T(\bar{{\bm{\beta}}})$, $\bar{\bm{\beta}}\lneq\hat{\bm{\beta}}$, such that there exists $t:\,1\leq t\leq {x}^*_j$ for which $\hat{\bm{\beta}}-t\mathbf{a}_j\notin T(\bar{\bm{\beta}}-t\mathbf{a}_j)$. Then there exists $\bm{\pi}\in\bar{\mathbf{B}}$ such that $\bar{\bm{\beta}}-t\mathbf{a}_j\lneq \bm{\pi}\leq \hat{\bm{\beta}}-t\mathbf{a}_j$ and \wenxin{$z(\bar{\bm{\beta}}-t\mathbf{a}_j)<z(\bm{\pi})=z(\hat{\bm{\beta}}-t\mathbf{a}_j)$}. \color{black}Let $\hat{\mathbf{x}}\in\opt(\bm{\pi})$. Then \wenxin{$z(\hat{\bm{\beta}}) \geq z(\bm{\pi}+t\mathbf{a}_j) \geq z(\bm{\pi})+tc_j$}. \color{black} However, by \thref{equal} and \thref{down},  $z(\bar{\bm{\beta}}-t\mathbf{a}_j) = z(\bar{\bm{\beta}})-tc_j = z(\hat{\bm{\beta}})-tc_j$, a contradiction since we have now claimed that $z(\hat{\bm{\beta}})-tc_j < z(\bm{\pi}) \leq z(\hat{\bm{\beta}})-tc_j$.
\end{proof}
\begin{continueexample}{exExampleIP}
    In (EXIP), with $\cB$ unbounded above, $(1,1)^{\top} \in \bar{\mathbf{B}}_6$ and $z((1,1)^{\top}) = 3$, with the unique optimal solution $\mathbf{e}_4$. Further, $(2,1)^{\top}, (3,1)^{\top}\in T((1,1)^{\top})$. So by \thref{LTstepdownt}, $(1,0)^{\top}, (2,0)^{\top} \in T((0,0)^{\top})$.\hfill\qedsymbol\end{continueexample}
\thref{bk=b} indicates that if, for any $k \in \{1, ..., n\}$, $\bar{\mathbf{B}}_k=\bar{\mathcal{B}}$, then for all $k' \geq k$, all MC-level sets will be unit hypercubes.
\begin{mypro}\thlabel{bk=b}
If there exists $k\in\{1,\dots,n\}$ such that $\bar{\mathbf{B}}_k=\bar{\mathcal{B}}$, then for all $\hat{k}\geq k,\hat{k}\in \{1,\dots,n\}$, $\bar{\mathbf{B}}_{\hat{k}}=\bar{\mathcal{B}}$.
\end{mypro}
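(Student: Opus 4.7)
The plan is to proceed by induction on $\hat{k}$, since it suffices to prove the single step: if $\bar{\mathbf{B}}_{\hat{k}} = \bar{\mathcal{B}}$, then $\bar{\mathbf{B}}_{\hat{k}+1} = \bar{\mathcal{B}}$. The base case $\hat{k} = k$ is the hypothesis. Using the simplification from the remark after \thref{minimal}, it is enough to show that for every $\bbeta \in \bar{\mathcal{B}}$ and every coordinate direction $i \in \{1,\dots,m\}$ with $\bbeta - \mathbf{e}_i \in \bar{\mathcal{B}}$, we have $z_{\hat{k}+1}(\bbeta - \mathbf{e}_i) < z_{\hat{k}+1}(\bbeta)$.

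The main tool I would invoke is the Gilmore-Gomory-type recursion of \thref{genstepup}. Applied to $\bbeta - \mathbf{e}_i$, it produces some $\ell' \in \mathbb{Z}_+$ with $\ell'\mathbf{a}_{\hat{k}+1} \leq \bbeta - \mathbf{e}_i$ achieving
\[
z_{\hat{k}+1}(\bbeta - \mathbf{e}_i) \;=\; z_{\hat{k}}\bigl(\bbeta - \mathbf{e}_i - \ell'\mathbf{a}_{\hat{k}+1}\bigr) + \ell' c_{\hat{k}+1}.
\]
Since $\ell'\mathbf{a}_{\hat{k}+1} \leq \bbeta$, the same $\ell'$ is feasible as a choice in the recursion for $\bbeta$, so
\[
z_{\hat{k}+1}(\bbeta) \;\geq\; z_{\hat{k}}\bigl(\bbeta - \ell'\mathbf{a}_{\hat{k}+1}\bigr) + \ell' c_{\hat{k}+1}.
\]
Both $\bbeta - \ell'\mathbf{a}_{\hat{k}+1}$ and $\bbeta - \mathbf{e}_i - \ell'\mathbf{a}_{\hat{k}+1}$ lie in $\bar{\mathcal{B}}$ (the latter because $\bbeta - \mathbf{e}_i \in \bar{\mathcal{B}}$ and $\mathbf{a}_{\hat{k}+1}$ is nonnegative integral), and they differ by $\mathbf{e}_i$. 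The inductive hypothesis $\bar{\mathbf{B}}_{\hat{k}} = \bar{\mathcal{B}}$ therefore gives $z_{\hat{k}}(\bbeta - \ell'\mathbf{a}_{\hat{k}+1} - \mathbf{e}_i) < z_{\hat{k}}(\bbeta - \ell'\mathbf{a}_{\hat{k}+1})$, and chaining the two displays above yields the required strict inequality $z_{\hat{k}+1}(\bbeta) > z_{\hat{k}+1}(\bbeta - \mathbf{e}_i)$.

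There isn't really a hard part here once one sees the pivot: the whole argument is that passing from $\bbeta - \mathbf{e}_i$ to $\bbeta$ and from $\bbeta - \mathbf{e}_i - \ell'\mathbf{a}_{\hat{k}+1}$ to $\bbeta - \ell'\mathbf{a}_{\hat{k}+1}$ is the same move, so a strict gain guaranteed by level-set-minimality at stage $\hat{k}$ is transported up to stage $\hat{k}+1$ via the recursion. The only bookkeeping to be careful about is confirming that the relevant right-hand sides lie in $\bar{\mathcal{B}}$ so that the inductive hypothesis is applicable, and noting that $\ell'$ optimal for $\bbeta - \mathbf{e}_i$ remains feasible (not necessarily optimal) for $\bbeta$, which is exactly the direction of inequality we need.
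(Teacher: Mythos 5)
Your proof is correct, but it takes a genuinely different route from the paper's. You induct on $\hat{k}$ one variable at a time and use the recursion of \thref{genstepup} to transport the strict decrease $z_{\hat{k}}(\bbeta-\ell'\ba_{\hat{k}+1}-\mathbf{e}_i)<z_{\hat{k}}(\bbeta-\ell'\ba_{\hat{k}+1})$, guaranteed by the inductive hypothesis, up to stage $\hat{k}+1$; the bookkeeping you flag (membership of the shifted right-hand sides in $\bar{\mathcal{B}}$, and feasibility versus optimality of $\ell'$ for $\bbeta$) is exactly right and the chain of inequalities closes. The paper instead argues directly by contradiction with no induction: if some $\bbeta\in\bar{\mathcal{B}}$ had a witness $\bpi\lneq\bbeta$ with $z_{\hat{k}}(\bpi)=z_{\hat{k}}(\bbeta)$, then superadditivity (\thref{elementary}) forces $z_{\hat{k}}(\bbeta-\bpi)=0$, monotonicity in $k$ (\thref{nondecreasingbetweeniterations}) gives $z_{k}(\bbeta-\bpi)=0=z_k(\bzero)$, and so the nonzero vector $\bbeta-\bpi\in\bar{\mathcal{B}}$ is not level-set-minimal at stage $k$, contradicting $\barB_k=\bar{\mathcal{B}}$. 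The paper's argument jumps from $k$ to $\hat{k}$ in one step and needs only the elementary properties of the value function, whereas yours leans on the heavier machinery of \thref{genstepup}; in exchange, yours localizes the failure to a specific coordinate direction and shows concretely how the Gilmore--Gomory recursion preserves level-set-minimality at each added variable, which is more in the spirit of the paper's primal, one-variable-at-a-time theme. Both are valid proofs.
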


\section*{Acknowledgments}
The authors would like to thank the referee and associate editor for their thorough and valuable input. The authors would also like to thank Eric Antley, David Mildebrath, Saumya Sinha, and Silviya Valeva of Rice University for their helpful comments. This research was supported in part by National Science Foundation, USA grants CMMI-1826323 and CMMI-1933373. 

\bibliographystyle{plainnat}
\bibliography{ref.bib}

\begin{thebibliography}{21}
\providecommand{\natexlab}[1]{#1}
\providecommand{\url}[1]{\texttt{#1}}
\expandafter\ifx\csname urlstyle\endcsname\relax
  \providecommand{\doi}[1]{doi: #1}\else
  \providecommand{\doi}{doi: \begingroup \urlstyle{rm}\Url}\fi

\bibitem[Ahmed et~al.(2004)Ahmed, Tawarmalani, and Sahinidis]{Ahmed2003}
S.~Ahmed, M.~Tawarmalani, and N.~V. Sahinidis.
\newblock A finite branch-and-bound algorithm for two-stage stochastic integer
  programs.
\newblock \emph{Mathematical Programming}, 100\penalty0 (2):\penalty0 355--377,
  2004.

\bibitem[Ajayi et~al.(2020)Ajayi, Thomas, and Schaefer]{Ajayi2020}
T.~Ajayi, C.~Thomas, and A.~J. Schaefer.
\newblock The gap function: Evaluating integer programming models over multiple
  right-hand sides.
\newblock \emph{Operations Research (To appear)}, 2020.

\bibitem[Basu et~al.(2021)Basu, Ryan, and Sankaranarayanan]{Basu2018}
A.~Basu, C.~T. Ryan, and S.~Sankaranarayanan.
\newblock Mixed-integer bilevel representability.
\newblock \emph{Mathematical Programming}, 185:\penalty0 163--197, 2021.

\bibitem[Blair(1995)]{Blair95}
C.~E. Blair.
\newblock A closed-form representation of mixed-integer program value
  functions.
\newblock \emph{Mathematical Programming}, 71\penalty0 (2):\penalty0 127--136,
  1995.

\bibitem[Blair and Jeroslow(1977)]{Blair77}
C.~E. Blair and R.~G. Jeroslow.
\newblock The value function of a mixed integer program: {I}.
\newblock \emph{Discrete Mathematics}, 19\penalty0 (2):\penalty0 121--138,
  1977.

\bibitem[Blair and Jeroslow(1982)]{Blair82}
C.~E. Blair and R.~G. Jeroslow.
\newblock The value function of an integer program.
\newblock \emph{Mathematical Programming}, 23\penalty0 (1):\penalty0 237--273,
  1982.

\bibitem[Gilmore and Gomory(1966)]{gg1966}
P.~C. Gilmore and R.~E. Gomory.
\newblock The theory and computation of knapsack functions.
\newblock \emph{Operations Research}, 14\penalty0 (6):\penalty0 1045--1074,
  1966.

\bibitem[K{\i}l{\i}n{\c{c}}-Karzan et~al.(2009)K{\i}l{\i}n{\c{c}}-Karzan,
  Toriello, Ahmed, Nemhauser, and Savelsbergh]{kk2009}
F.~K{\i}l{\i}n{\c{c}}-Karzan, A.~Toriello, S.~Ahmed, G.~Nemhauser, and
  M.~Savelsbergh.
\newblock Approximating the stability region for binary mixed-integer programs.
\newblock \emph{Operations Research Letters}, 37\penalty0 (4):\penalty0
  250--254, 2009.

\bibitem[Kong et~al.(2006)Kong, Schaefer, and Hunsaker]{Kong2006}
N.~Kong, A.J. Schaefer, and B.~Hunsaker.
\newblock Two-stage integer programs with stochastic right-hand sides: A
  superadditive dual approach.
\newblock \emph{Mathematical Programming}, 108\penalty0 (2):\penalty0 275--296,
  Sep 2006.

\bibitem[Llewellyn and Ryan(1993)]{Llewellyn1993}
D.~C. Llewellyn and J.~Ryan.
\newblock A primal dual integer programming algorithm.
\newblock \emph{Discrete Applied Mathematics}, 45\penalty0 (3):\penalty0 261 --
  275, 1993.

\bibitem[Lozano and Smith(2017)]{ls2017}
L.~Lozano and J.~C. Smith.
\newblock A value-function-based exact approach for the bilevel mixed-integer
  programming problem.
\newblock \emph{Operations Research}, 65\penalty0 (3):\penalty0 768--786, 2017.

\bibitem[Nemhauser and Wolsey(1988)]{NemhauserWolsey1988}
G.~L. Nemhauser and L.~A. Wolsey.
\newblock \emph{Integer and Combinatorial Optimization}.
\newblock Wiley-Interscience. John Wiley \& Sons, 1988.

\bibitem[{\"O}zalt{\i}n et~al.(2012){\"O}zalt{\i}n, Prokopyev, and
  Schaefer]{Ozaltin2012}
O.~Y. {\"O}zalt{\i}n, O.~A. Prokopyev, and A.~J. Schaefer.
\newblock Two-stage quadratic integer programs with stochastic right-hand
  sides.
\newblock \emph{Mathematical Programming}, 133\penalty0 (1):\penalty0 121--158,
  Jun 2012.

\bibitem[Ralphs and Hassanzadeh(2014)]{Ralphs2014}
T.~K. Ralphs and A.~Hassanzadeh.
\newblock On the value function of a mixed integer linear optimization problem
  and an algorithm for its construction.
\newblock \emph{COR$@$L Technical Report 14T–004}, 2014.

\bibitem[Schultz et~al.(1998)Schultz, Stougie, and Van Der~Vlerk]{ss1998}
R.~Schultz, L.~Stougie, and M.~H. Van Der~Vlerk.
\newblock Solving stochastic programs with integer recourse by enumeration: A
  framework using {G}r{\"o}bner basis.
\newblock \emph{Mathematical Programming}, 83\penalty0 (1-3):\penalty0
  229--252, 1998.

\bibitem[Tavasl\text{\i}o\u{g}lu et~al.(2019)Tavasl\text{\i}o\u{g}lu,
  Prokopyev, and Schaefer]{Onur2019}
O.~Tavasl\text{\i}o\u{g}lu, O.~A. Prokopyev, and A.~J. Schaefer.
\newblock Solving stochastic and bilevel mixed-integer programs via a
  generalized value function.
\newblock \emph{Operations Research}, 67\penalty0 (6):\penalty0 1659--1677,
  2019.

\bibitem[Trapp and Prokopyev(2015)]{Trapp2015}
A.~C. Trapp and O.~A. Prokopyev.
\newblock A note on constraint aggregation and value functions for two-stage
  stochastic integer programs.
\newblock \emph{Discrete Optimization}, 15:\penalty0 37--45, 2015.

\bibitem[Trapp et~al.(2013)Trapp, Prokopyev, and Schaefer]{Trapp2014}
A.~C. Trapp, O.~A. Prokopyev, and A.~J. Schaefer.
\newblock On a level-set characterization of the value function of an integer
  program and its application to stochastic programming.
\newblock \emph{Operations Research}, 61\penalty0 (2):\penalty0 498--511, 2013.

\bibitem[Wang and Xu(2017)]{wx2017}
L.~Wang and P.~Xu.
\newblock The watermelon algorithm for the bilevel integer linear programming
  problem.
\newblock \emph{SIAM Journal on Optimization}, 27\penalty0 (3):\penalty0
  1403--1430, 2017.

\bibitem[Williams(1996)]{Williams1996}
H.~P. Williams.
\newblock Constructing the value function for an integer linear programme over
  a cone.
\newblock \emph{Computational Optimization and Applications}, 6\penalty0
  (1):\penalty0 15--26, 1996.

\bibitem[Wolsey(1981)]{wolsey1981integer}
L.~A. Wolsey.
\newblock Integer programming duality: Price functions and sensitivity
  analysis.
\newblock \emph{Mathematical Programming}, 20\penalty0 (1):\penalty0 173--195,
  1981.

\end{thebibliography}

\newpage
\appendix
\section*{Appendix}
\subsection*{Omitted Proofs}
\begin{makeshiftResult}{Lemma}{\ref{equal}}
Given $k \in \{1, ..., n\}$, for all $\bm{\beta}\in \bar{\mathbf{B}}_k$ and all $\mathbf{x}^*\in\opt_k(\bm{\beta})$, $\sum_{j=1}^k\mathbf{a}_j{x}^*_j=\bm{\beta}$. 
\end{makeshiftResult}
\begin{proof}
Suppose $\sum_{j=1}^k\mathbf{a}_j{x}^*_j\lneq\bm{\beta}$. Then $z_k(\sum_{j=1}^k\mathbf{a}_j{x}^*_j)=z_k(\bm{\beta})$, which contradicts $\bm{\beta}\in \bar{\mathbf{B}}_k$.
\end{proof}

\begin{makeshiftResult}{Proposition}{\ref{LSMiter}}
Let $\bm{\beta}\in \bar{\mathbf{B}}_{k-1}$. If for all $\bar{\bm{\beta}}\lneq \bm{\beta}$ and all $\mathbf{x}^*\in \opt_k(\bar{\bm{\beta}})$, ${x}^*_k= 0$, then $\bm{\beta}\in \bar{\mathbf{B}}_{k}$.
\end{makeshiftResult}
\begin{proof}
Let $\bm{\beta}\in S_k(\alpha)$. Suppose $\bm{\beta}\notin \bar{\mathbf{B}}_{k}$. Then there exists $\hat{\bm{\beta}}\lneq \bm{\beta}$ such that $\hat{\bm{\beta}}\in S_k(\alpha)$. Let ${\mathbf{x}^*}\in \opt_k(\hat{\bm{\beta}})$, with ${x}^*_{k}=0$. \color{black}Then by \thref{schaeferoriginal}, $\hat{\bm{\beta}}\in S_{k-1}(\alpha)$, which implies $z_{k-1}(\hat{\bm{\beta}})= \alpha$. However, because $z_{k}({\bm{\beta}})= \alpha$, by \thref{nondecreasingbetweeniterations}, we have $z_{k-1}(\bm{\beta})\leq \alpha$. Because $z_{k-1}$ is nondecreasing, we must have $z_{k-1}(\bm{\beta}) = \alpha$, so that $\bm{\beta}\notin \bar{\mathbf{B}}_{k-1}$, a contradiction.
\end{proof}

\begin{makeshiftResult}{Proposition}{\ref{LSMlinind}}
If $\bm{\beta} \notin\bar{\mathbf{B}}_{k-1}$, $\mathbf{a}_1,\dots,\mathbf{a}_k$ are linearly independent, and $\bbeta$ and $\mathbf{a}_k$ are linearly independent, then for all $t\in \mathbb{Z}_+$ such that $\bm{\beta}+t\mathbf{a}_k\in \mathcal{B}$, $\bm{\beta}+t\mathbf{a}_k \notin \bar{\mathbf{B}}_{k}$.
\end{makeshiftResult}
\begin{proof} 
Suppose $\bbeta + t\ba_{k} \in \barB_{k}.$ Because $\bm{\beta} \not\in \barB_{k-1}$, there exists $\bpi \lneq \bbeta$ such that $z_{k-1}(\bbeta) = z_{k-1}(\bpi)$.  Let $x^{1} \in \opt_{k-1}(\bpi)$, then $(x^{1}_{1},\dots,x^{1}_{k-1},t)^{\top}$ is a feasible solution for IP$_k(\bbeta + t\ba_{k})$. Let $\bx^{2} \in \opt_k(\bbeta + t\ba_{k})$; then by \thref{equal}, we have $\sum\limits_{j = 1}^{k} \ba_{j}x^{2}_{j} = \bbeta + t\ba_{k}$, and because $\ba_{1},\dots\ba_{k}$ are linearly independent, $x^{2}_{k} = t$ and $\sum\limits_{j = 1}^{k-1}\ba_{j}x^{2}_{j} = \bbeta$. 

The vector $(x^{2}_{1},\dots,x^{2}_{k-1})^{\top}$ is feasible for IP$_{k-1}(\bm{\beta}$); hence $\sum\limits_{j = 1}^{k-1} c_{j}x^{2}_{j} \leq \sum\limits_{j = 1}^{k-1}c_{j}x_{j}^{1}$. This implies that $\sum\limits_{j = 1}^{k} c_{j}x^{2}_{j} = \sum\limits_{j = 1}^{k-1}c_{j}x^{2}_{j} + c_{k}t \leq \sum\limits_{j = 1}^{k-1}c_{j}x^{1}_{j} + tc_{k}$. Because $\bx^{2} \in \opt_k(\bbeta + t\ba_{k})$, and $(x^{1}_{1},\dots,x^{1}_{k-1},t)$ is feasible for IP$_k(\bbeta+t\mathbf{a}_k)$, $(x^{1}_{1},\dots,x^{1}_{k-1},t)\in \opt_k(\bbeta + t\ba_{k})$. However, $\sum\limits_{j = 1}^{k-1}\ba_{j}x^{1}_{j} + t\ba_{k} = \bpi + t\ba_{k} \lneq \bbeta + t\ba_{k},$ which contradicts $\bbeta + t\ba_{k} \in \bar{\bB}_{k}$. Hence, $\bbeta + t\ba_{k} \not\in \barB_{k}$.
\end{proof}

\begin{makeshiftResult}{Proposition}{\ref{anotinB}}
Let $\bm{\beta}\in\bar{\mathbf{B}}_k$. If $\mathbf{a}_j\notin \bar{\mathbf{B}}_k$, then for all ${\mathbf{x}^*}\in\opt_k(\bm{\beta})$, ${x}^*_j=0$.
\end{makeshiftResult}
\begin{proof}
Suppose first that ${\mathbf{x}^*} = \mathbf{e}_j$. Then by \thref{equal}, $\mathbf{a}_j \in \bar{\mathbf{B}}_k$, a contradiction.\\
Suppose on the other hand that $\Vert {\mathbf{x}^*}\Vert_1 \geq 2$, with ${x}^*_j \neq 0$. Let $\hat{\mathbf{x}}=\mathbf{e}_j\lneq{\mathbf{x}^*}$. Then by \thref{down}, $\sum_{i=1}^k \mathbf{a}_i\hat{x}_i=\mathbf{a}_j\in \bar{\mathbf{B}}_k$, also a contradiction.
\end{proof}

\begin{makeshiftResult}{Corollary}{\ref{relationship}} 
$\bar{\mathbf{B}}_k\subseteq \left\{\bm{\beta}\in\bar{\mathcal{B}}\,\vert\, \bm{\beta}=\hat{\bm{\beta}}+t\mathbf{a}_k,\hat{\bm{\beta}}\in\bar{\mathbf{B}}_{k-1},t\in\mathbb{Z}_+ \right\}$.
\end{makeshiftResult}
\begin{proof}
Suppose $\bar{\bbeta} \in \barB_{k}$, then \thref{equal} implies that there exists ${\bx^*} \in \opt_k(\bar{\bbeta})$ such that $\slj^{k}\ba_{j}{x}^*_{j} = \bar{\bbeta}.$ By \thref{schaefer}, $\bar{\bbeta} - \ba_{k}{x}^*_{k} \in \barB_{k-1}$; hence, $\bar{\bbeta} \in \left\{\bm{\beta}\in\bar{\mathcal{B}}\,\vert\, \bm{\beta}=\hat{\bm{\beta}}+t\mathbf{a}_k,\hat{\bm{\beta}}\in\bar{\mathbf{B}}_{k-1},t\in\mathbb{Z}_+ \right\}$.
\end{proof}

\begin{makeshiftResult}{Lemma}{\ref{LSMkton}}
Suppose $\ba_{k} \in \bar{\bB}_{k}$ and $z_{k}(\ba_{k}) = c_{k}$, for some $k \in \{1,\dots,n\}$. Further suppose that for all $\hat{k} \in \mathbb{Z}_{+}$ such that $k < \hat{k} \leq n$, we have $\ba_{k} - \ba_{\hat{k}} \not\in \cB$. Then, $z_{n}(\ba_{k}) = c_{k}$ and $\ba_{k} \in \bar{\bB}$.
\end{makeshiftResult}
\begin{proof}
Suppose $z_n(\mathbf{a}_k)\neq c_k$. Then $\mathbf{e}_k\notin \opt_n(\mathbf{a}_k)$ and there exists $\mathbf{x}^*\in \opt_n(\mathbf{a}_k)$ such that $\sum_{j=1}^n \mathbf{a}_jx^*_j\leq \mathbf{a}_k,\,x^*_k=0$. Therefore, there exists $\hat{k}\neq k$ such that $x^*_{\hat{k}}\neq 0$, then $\mathbf{a}_{\hat{k}}\leq \mathbf{a}_k$, which contradicts $\mathbf{a}_k-\mathbf{a}_{\hat{k}}\notin \mathcal{B}$.\par
Suppose $\mathbf{a}_k\notin \bar{\mathbf{B}}$. Then there exists $\bm{\pi}\lneq\mathbf{a}_k$ such that $z_n(\bm{\pi})=z_n(\mathbf{a}_k)=c_k$. Since there does not exist $\hat{k} \in \{k+1, ..., n\}$ such that $\mathbf{a}_{\hat{k}} \leq \mathbf{a}_k$, it follows that there does not exist $\hat{k} \in \{k+1, ..., n\}$ such that $\ba_{\hat{k}} \leq \bpi$, and this indicates $z_k(\bm{\pi})=c_k$, which is a contradiction of the assumption that $\mathbf{a}_k\in \bar{\mathbf{B}}_k$.
\end{proof}

\begin{makeshiftResult}{Proposition}{\ref{altvalue}}
Exactly one of the following holds:
\begin{enumerate}[label = (\roman*)]
    \item $z_{k-1}(\mathbf{a}_k) < c_k$. Then $z_{k}(\mathbf{a}_k)= c_k$ and $\mathbf{a}_k\in \bar{\mathbf{B}}_k$.
    \item $z_{k-1}(\mathbf{a}_k) = c_k$. Then $z_{k}(\mathbf{a}_k)= c_k$.
    \item $z_{k-1}(\mathbf{a}_k) > c_k$. Then $z_{k}(\mathbf{a}_k)=z_{k-1}(\mathbf{a}_k)$.
\end{enumerate}
Thus, $z_k(\mathbf{a}_k) = \max\{z_{k-1}(\mathbf{a}_k), c_k\}$. In addition, if $z_{k-1}(\mathbf{a}_k) \geq c_k$, $\mathbf{a}_k\in \bar{\mathbf{B}}_k$ if and only if $\mathbf{a}_k\in \bar{\mathbf{B}}_{k-1}$.
\end{makeshiftResult}
\begin{proof}
\begin{enumerate}[label = (\roman*), align=left, leftmargin=0pt, labelindent=\parindent, listparindent=\parindent, labelwidth=0pt, itemindent=!]
\par
    \item Suppose $z_{k-1}(\mathbf{a}_k) < c_k$. The vector $\mathbf{e}_k$ is feasible for IP$_k(\mathbf{a}_k)$ with value $c_k$. Suppose there exists $\mathbf{x}^* \in \opt_k(\mathbf{a}_k)$ such that $\sum_{j=1}^k c_jx_j^* > c_k$. Then $\mathbf{x}_k^* = 0$, contradicting $z_{k-1}(\mathbf{a}_k) < c_k$. Similarly, there cannot exist $\bbeta \lneq \mathbf{a}_k$ such that $z_k(\bbeta) \geq c_k$, so $\mathbf{a}_k\in \bar{\mathbf{B}}_k$.
    \item Suppose $z_{k-1}(\mathbf{a}_k) = c_k$. Then by analogous reasoning to (i), $z_k(\mathbf{a}_k) = c_k$.
    \item Suppose $z_{k-1}(\mathbf{a}_k) > c_k$. Then by analogous reasoning to (i), $z_k(\mathbf{a}_k) = z_{k-1}(\mathbf{a}_k)$.
\end{enumerate}
\noindent Next assume $z_{k-1}(\mathbf{a}_k) \geq c_k$. If $\mathbf{a}_k \notin \bar{\bB}_{k-1}$, then $\mathbf{a}_k \notin \bar{\bB}_k$. On the other hand, if $\mathbf{a}_k \in \bar{\bB}_{k-1}$, by analogous reasoning to (i), there cannot exist $\bbeta \lneq \mathbf{a}_k$ such that $z_k(\bbeta) \geq z_k(\mathbf{a}_k)$, so $\mathbf{a}_k \in \bar{\bB}_k$.
\vspace{0pt}
\end{proof}

\begin{makeshiftResult}{Lemma}{\ref{easyiso}}
Given $\bbeta_1\in \cB$, $\bbeta_2 \in T(\bbeta_1)$, there exists an isovalue curve $d': [0,1]\to\cB$ from $\bbeta_1$ to $\bbeta_2$ such that $\lfloor d'(\zeta)\rfloor = \lfloor d'(\theta)\rfloor$ implies $\lfloor d'(\zeta)\rfloor = \lfloor d'(\eta)\rfloor$ for all $0\leq \zeta<\eta<\theta\leq 1$  - that is, $\lfloor d'\rfloor$ takes on any given value for at most a single connected subset of $[0,1]$.
\end{makeshiftResult}
\begin{proof}
Given a continuous isovalue curve $d:[0,1]\to\cB$ from $\bbeta_1$ to $\bbeta_2$, we define the set $\mathcal{S}(d)$ as the set of values in $\bar{\cB}$ which are given by $\lfloor d\rfloor$ for multiple, non-connected subsets of $[0,1]$. In particular, $\mathcal{S}(d) = \left\{\bm{\gamma}^1, ..., \bm{\gamma}^r\right\}\subset \bar{\cB}$, so that for all $\bm{\gamma}^i\in \mathcal{S}(d)$, there exist $0\leq \zeta^i<\eta^i<\theta^i\leq 1$ for which $\lfloor d(\zeta^i)\rfloor = \lfloor d(\theta^i)\rfloor = \bm{\gamma}^i\neq \lfloor d(\eta^i)\rfloor$, and so that for any $\bm{\gamma} \in T(\bbeta_1)$ but not in $\mathcal{S}(d)$, for any $0\leq \zeta<\eta<\theta\leq 1$, $\lfloor d(\zeta)\rfloor = \lfloor d(\theta)\rfloor = \bm{\gamma}$ implies $\lfloor d(\eta)\rfloor = \bm{\gamma}$.
Note that since any particular continuous isovalue curve $d$ is continuous and has a bounded domain, each component of $d$ is also bounded, so $\mathcal{S}(d)$ is finite. Let $\bar{d}:[0,1]\to\cB$ be a continuous isovalue curve from $\bbeta_1$ to $\bbeta_2$ such that $\vert \mathcal{S}(\bar{d})\vert$ is minimized. Suppose the statement does not hold; then $\vert \mathcal{S}(\bar{d})\vert > 0$. Without loss of generality, let $\zeta^1$ be in the first connected subset of $[0,1]$ for which $\lfloor \bar{d}\rfloor = \bm{\gamma}^1$ and let $\theta^1$ be in the last connected subset of $[0,1]$ for which $\lfloor \bar{d}\rfloor = \bm{\gamma}^1$. Note however that $\left\{\bbeta'\in \cB\,\vert\,\lfloor \bbeta' \rfloor = \bm{\gamma}^1\right\}$ is a convex set over which $z$ is constant, and $\{\bar{d}(\theta^{1}), \bar{d}(\zeta^{1})\} \subset T(\beta_{1})$, so the line segment from $\bar{d}(\zeta^{1})$ to $\bar{d}(\theta^{1})$ must be a subset of $T(\bbeta_1)$. Thus, consider $\hat{d}:[0,1]\to\cB$:
$$\hat{d}(t)\coloneqq\left\{\begin{array}{ll}
    \bar{d}(t) & t \in [0,1]\setminus [\zeta^1, \theta^1] \\
    \frac{t-\zeta^1}{\theta^1-\zeta^1}\bar{d}(\zeta^1)+\frac{\theta^1-t}{\theta^1-\zeta^1}\bar{d}(\theta^1) & t \in [\zeta^1, \theta^1]
\end{array}\right..$$
Observe that $\hat{d}$ as defined is a continuous isovalue curve from $\bbeta_1$ to $\bbeta_2$ and $\mathcal{S}(\hat{d})$ has at most $\vert \mathcal{S}(\bar{d})\vert-1$ members, a contradiction, so there exists $d'$ for which $\mathcal{S}(d') = \emptyset$.
\end{proof}\color{black}

\begin{makeshiftResult}{Corollary}{\ref{finseq}}
Given ${\bm{\beta}}\in\cB$, $\bm{\beta}_a$ and $\bm{\beta}_b\in T({\bm{\beta}})$, there exists a finite sequence of points,\\$\left\{\bm{\gamma}^0 = \bm{\beta}_a, ..., \bm{\gamma}^r = \bm{\beta}_b\right\}$, such that $\bm{\gamma}^i\in T({\bm{\beta}})$ and $\vert \bm{\gamma}^{i+1}-\bm{\gamma}^i\vert = \lambda_i\mathbf{e}_j$ for all $i$, where $\lambda_i \in (0,1]$ for all $i$ and $\mathbf{e}_j$ is a unit vector for $j \in \{1, ..., m\}$.
\end{makeshiftResult}
\begin{proof}
Note that $\lfloor \bm{\beta}_a\rfloor$ and $\lfloor \bm{\beta}_b\rfloor$ are both in $T(\bm{\beta})$. If $s$ and $t$ are the number of fractional components of $\bm{\beta}_a$ and $\bm{\beta}_b$, respectively, then the first $s$ members of the sequence after $\bm{\beta}_a$ can be defined as the component by component rounding down of $\bm{\beta}_a$, so that $\bm{\gamma}^{s} = \lfloor{\bm{\beta}_a}\rfloor$. The same can be done for the last $t$ members of the sequence, so that $\bm{\gamma}^{r-t} = \lfloor \bm{\beta}_b\rfloor$. These first $s$ and final $t$ terms are all in $T(\bm{\beta})$, and there is a finite number of them. Then by \thref{Csteppro}, there exists a finite sequence from $\bm{\gamma}^{s}$ to $\bm{\gamma}^{r-t}$ in $C(\bm{\beta})$, so prepending the first $s$ terms and appending the final $t$ terms to that sequence will yield the desired result.
\end{proof}

\begin{makeshiftResult}{Proposition}{\ref{LSMcor}}
For any $\bar{\bm{\beta}}\in\cB$, $\bm{\beta}\in T(\bar{\bm{\beta}})$ if and only if $z(\bm{\beta}) = z(\bar{\bm{\beta}})$ and there exists $\hat{\bm{\beta}} \in $ \wenxin{$\bar{\mathbf{B}}$} \color{black} $\cap T(\bar{\bm{\beta}})$ such that $\hat{\bm{\beta}}\leq \bm{\beta}$.
\end{makeshiftResult}
\begin{proof}
By definition of $T(\bbeta)$, if $z(\bbeta) \neq z(\bar{\bbeta})$, then $\bbeta \not\in T(\bar{\bbeta})$. Suppose $z(\bm{\beta}) = z(\bar{\bm{\beta}})$ but there does not exist level-set-minimal $\hat{\bm{\beta}} \leq \bm{\beta}$ in the same MC-level set as $\bar{\bm{\beta}}$. Note that there must exist at least one $\tilde{\bm{\beta}} \leq \bm{\beta}$ which is level-set-minimal and for which $\tilde{\bbeta} \in T(\bbeta)$. However, there is no continuous curve from $\bar{\bm{\beta}}$ to any such $\tilde{\bm{\beta}}$; thus, because there is a continuous curve from $\tilde{\bm{\beta}}$ to $\bm{\beta}$, there cannot be a continuous curve from $\bar{\bm{\beta}}$ to $\bm{\beta}$, i.e., $\bbeta \not\in T(\bar{\bbeta})$. On the other hand, suppose there does exist level-set-minimal $\hat{\bm{\beta}} \leq \bm{\beta}$ in the same MC-level set as $\bar{\bm{\beta}}$. Then there exists a continuous curve from $\bar{\bm{\beta}}$ to $\hat{\bm{\beta}}$, and from $\hat{\bm{\beta}}$ to $\bm{\beta}$, so that there exists a single continuous curve from $\bar{\bm{\beta}}$ to $\bm{\beta}$, and $\bm{\beta} \in T(\bar{\bm{\beta}})$.
\end{proof}

\begin{makeshiftResult}{Corollary}{\ref{LSMseq}}
Let \wenxin{$\bar{\bbeta} \in \bar{\mathbf{B}}$} \color{black} such that there exists $\hat{\bbeta} \in \bar{\mathbf{B}}\cap T(\bar{\bm{\beta}})$, $\hat{\bbeta}\neq \bar{\bbeta}$. Then there exists a sequence $V = \{v_1, ..., v_s\}$ of distinct vectors in $\bar{\mathbf{B}}\cap T(\bar{\bm{\beta}})$ such that $\bar{\bbeta} = v_1$, $\hat{\bbeta} = v_s$, and for all $i \in \{1, ..., s-1\}$, there exists $\bbeta_i \in T(\bar{\bbeta})$ such that $v_i\lneq \bbeta_i$ and $v_{i+1}\lneq \bbeta_i$.
\end{makeshiftResult}
\begin{proof}
Let $U = \{u_1, ..., u_t\}$ be the finite sequence of points in $T(\bar{\bbeta})$ from $\bar{\bbeta}$ to $\hat{\bbeta}$ whose existence is guaranteed by \thref{Csteppro}. Set $V = \{\bar{\bbeta}\}$ \color{black} initially, and set $\tilde{\bbeta} = \bar{\bbeta}$. Let $u_i$ be the last member in the ordering of $U$ such that $\tilde{\bbeta}\lneq u_i$. Then $u_{i+1} = u_i - \mathbf{e}_j$ for some component $j$, since each member of $U$ differs from the next by a single unit vector, and we must have $u_{i+1}\lneq u_i$ since we have assumed that $u_i$ is the final vector in the ordering of $U$ for which $\tilde{\bbeta}\lneq u_i$. By \thref{LSMcor}, there must exist $\bbeta'\in \bar{\mathbf{B}}_n \cap T(\bar{\bbeta})$ such that $\bbeta'\leq u_{i+1}$. Further, we also have $\bbeta'\lneq u_i$, so we can append $\bbeta'$ to $V$. Then update $\tilde{\bbeta} = \bbeta'$ and repeat until $\bbeta' = \hat{\bbeta}$ is added to $V$. Since $U$ is finite, the process will complete in finitely many steps.
\end{proof}

\begin{makeshiftResult}{Proposition}{\ref{bk=b}}
If there exists $k\in\{1,\dots,n\}$ such that $\bar{\mathbf{B}}_k=\bar{\mathcal{B}}$, then for all $\hat{k}\geq k,\hat{k}\in \{1,\dots,n\}$, $\bar{\mathbf{B}}_{\hat{k}}=\bar{\mathcal{B}}$.
\end{makeshiftResult}
\begin{proof}
Fix $\hat{k}$ such that $k < \hat{k} \leq n$ (if $k = n$, the result is immediate). Suppose there exists $\bm{\beta} \in \bar{\mathcal{B}}$ such that $\bbeta\notin \bar{\mathbf{B}}_{\hat{k}}$. Then there must exist $\bm{\pi}\lneq\bm{\beta}$ such that $z_{\hat{k}}(\bm{\pi})=z_{\hat{k}}(\bm{\beta})$. By \thref{elementary}, we have
\[z_{\hat{k}}(\bm{\pi})=z_{\hat{k}}(\bm{\beta})=z_{\hat{k}}(\bm{\pi}+\bm{\beta}-\bm{\pi})\geq z_{\hat{k}}(\bm{\pi})+z_{\hat{k}}(\bm{\beta}-\bm{\pi}).\]
Therefore, $z_{\hat{k}}(\bm{\beta}-\bm{\pi})=0$. By \thref{notOptimal}, $0\leq z_{k}(\bm{\beta}-\bm{\pi})\leq z_{\hat{k}}(\bm{\beta}-\bm{\pi})=0$; hence, $\bbeta - \bpi \not\in \bar{\bB}_{k}$, a contradiction. As such, $\bar{\mathbf{B}}_{\hat{k}}=\bar{\mathcal{B}}$.
\end{proof}

\end{document}